\numberwithin{equation}{section}
\theoremstyle{plain}
\newtheorem{theorem}{Theorem}[section]
\newtheorem{lemma}[theorem]{Lemma}
\theoremstyle{definition}
\newtheorem{Def}[theorem]{Definition}
\newtheorem{example}[theorem]{Example}
\newtheorem{remark}[theorem]{Remark}
\newtheorem{?}[theorem]{Problem}
\newcommand{\N}{\mathbb{N}}
\newcommand{\Z}{\mathbb{Z}}
\newcommand\des{\mathop{\rm des}}
\def\DES{\mathrm{DES}}
\def\st{\mathrm{st}}
\def\max{\mathrm{max}}
\def\min{\mathrm{min}}
\def\des{\mathrm{des}}
\def\Des{\mathrm{DES}}
\def\S{\mathfrak{S}}
\def\da{\mathrm{da}}
\def\Orb{\mathrm{Orb}}
\def\ST{\mathrm{ST}}
\def\VID{\mathrm{VID}}
\def\LMA{\mathrm{LMA}}
\def\LMI{\mathrm{LMI}}
\def\RMA{\mathrm{RMA}}
\def\RMI{\mathrm{RMI}}
\def\BJP{\mathrm{BJP}}
\def\AVA{\mathrm{AVA}}
\def\DIST{\mathrm{DIST}}
\def\ZERO{\mathrm{ZERO}}
\def\EMA{\mathrm{EMA}}
\def\zero{\mathrm{zero}}
\def\ema{\mathrm{ema}}
\def\turn{\mathrm{turn}}
\def\Red{\mathrm{red}}
\def\return{\mathrm{return}}
\def\segm{\mathrm{segment}}
\def\lma{\mathrm{lma}}
\def\lmi{\mathrm{lmi}}
\def\asc{\mathrm{asc}}
\def\ASC{\mathrm{ASC}}
\def\dist{\operatorname{dist}}
\def\EXPO{\operatorname{EXPO}}
\def\ides{\operatorname{ides}}
\def\max{\operatorname{max}}
\def\I{\operatorname{{\bf I}}}
\def\s{\operatorname{{\bf s}}}
\def\st{\operatorname{st}}
\def\boxit#1{\leavevmode\hbox{\vrule\vtop{\vbox{\kern.33333pt\hrule\kern1pt\hbox{\kern1pt\vbox{#1}\kern1pt}}\kern1pt\hrule}\vrule}}
\begin{document}

\title[Sextuple Equidistribution]{A sextuple equidistribution arising in\\ Pattern Avoidance}
\author[Z. Lin]{Zhicong Lin}
\address[Zhicong Lin]{School of Science, Jimei University, Xiamen 361021,P.R. China
\& CAMP, National Institute for Mathematical Sciences, Daejeon, 34047, Republic of Korea} 
\email{lin@nims.re.kr}

\author[D. Kim]{Dongsu Kim}
\address[Dongsu Kim]{Department of Mathematical Sciences, Korea Advanced Institute of Science and Technology, Daejeon, 34141, Republic of Korea} 
\email{dongsu.kim@kaist.ac.kr}

\date{\today}

\begin{abstract} We construct an intriguing bijection between $021$-avoiding inversion sequences and $(2413,4213)$-avoiding permutations, which proves a sextuple equidistribution involving double Eulerian statistics. Two interesting applications of this result are also presented. Moreover, this result inspires us to characterize all permutation classes that avoid two patterns of length $4$ whose descent polynomial equals that of separable permutations. 
\end{abstract}

\keywords{Double Eulerian distributions; Pattern Avoidance; inversion sequences; ascents; descents; Schr\"oder numbers}

\maketitle


\section{Introduction}
Let $\s=\{s_i\}_{i\geq1}$ be a sequence of positive integers. In order to interpret the Ehrhart polynomials of the $\s$-lecture hall polytopes, Savage and Schuster~\cite{ss} introduced the associated {\em $\s$-inversion sequences of length $n$} as
$$
\I_n^{(\s)}:=\{(e_1,e_2,\ldots,e_n): 0\leq e_i<s_i\}.
$$
Since then, many remarkable equidistributions between $\s$-inversion sequences and permutations have been investigated~\cite{chen,ss,sv,lin}. In particular, Savage and Visontai~\cite{sv} established connections with the descent polynomials of Coxeter groups which enable them to settle some real root conjectures. When $\s=(1,2,3,\ldots)$, the $\s$-inversion sequences $\I_n=\I_n^{(\s)}$ are usually called {\em inversion sequences}. Recently, Corteel-Martinez-Savage-Weselcouch~\cite{cor} and Mansour-Shattuck~\cite{ms} carried out the systematic studies of patterns  in inversion sequences. A number of familiar combinatorial sequences, such as {\em large Schr\"oder numbers} and {\em Euler numbers}, arise in their studies.

 In this paper, we will prove a set-valued sextuple equidistribution arising from pattern avoidance in inversion sequences. Our result can be considered as an extention, since it involves four more statistics, of a restricted version of Foata's 1977 result~\cite{fo}, which asserts that descents and inverse descents on permutations have the same joint distribution as ascents and distinct entries on inversion sequences. We need some further notations and definitions before we can state our main result. 
 
The inversion sequences $\I_n$ serve as various kinds of codings for $\S_n$, the set of all permutations of $[n]:=\{1,2,\ldots,n\}$. By a {\em coding} of $\S_n$, we mean a bijection from $\S_n$ to $\I_n$. For example, the map $\Theta(\pi):\S_n\rightarrow\I_n$ defined for $\pi=\pi_1\pi_2\ldots\pi_n\in\S_n$ as
 $$
 \Theta(\pi)=(e_1,e_2,\ldots,e_n),\quad\text{where $e_i:=\left|\{j: \text{$j<i$ and $\pi_j>\pi_i$}\}\right|$},
 $$
 is a natural coding of $\S_n$. Clearly, the sum of the entries of $\Theta(\pi)$ equals the number of {\em inversions} of $\pi$, i.e., the number of pairs $(i,j)$ such that $i<j$ and $\pi_i>\pi_j$. This is the reason why $\I_n$ is named inversion sequences here. 
 For each $\pi\in\S_n$ and each $e\in\I_n$, let
$$
\DES(\pi):=\{i\in[n-1]:\pi_i>\pi_{i+1}\}\quad \text{and}\quad\ASC(e):=\{i\in[n-1]: e_i<e_{i+1}\}
$$ 
be the {\em {\bf\em des}cent set} of $\pi$ and the {\em {\bf\em asc}ent set} of $e$, respectively. Another important property of $\Theta$ is that $\DES(\pi)=\ASC(\Theta(\pi))$ for each $\pi\in\S_n$. Thus,
\begin{equation}\label{des:asc}
\sum_{\pi\in\S_n}t^{\DES(\pi)}=\sum_{e\in\I_n}t^{\ASC(e)},
\end{equation}
where $t^{S}:=\prod_{i\in S}t_i$ for any set $S$ of positive integers. 

Throughout this paper, we will use the convention that if  upper case  ``$\ST$'' is a set-valued statistic,
then lower case  ``$\st$'' is the corresponding numerical statistic. For example, $\des(\pi)$ is the cardinality of $\DES(\pi)$ for each $\pi\in\S_n$. It is well known that $A_n(t):=\sum_{\pi\in\S_n}t^{\des(\pi)}$ is the classical {\em$n$-th Eulerian polynomial}~\cite{fs,st} and each statistic whose distribution gives $A_n(t)$ is called a {\em Eulerian statistic}. In view of~\eqref{des:asc}, ``$\asc$'' is a Eulerian statistic on inversion sequences. For each $e\in\I_n$, define $\dist(e)$ to be the {\em number of {\bf\em dist}inct positive entries} of~$e$. This statistic was first introduced by Dumont~\cite{du}, who also showed that it is a Eulerian statistic on inversion sequences. Amazingly, Foata~\cite{fo} later invented two different codings of permutations called {\em V-code} and {\em S-code} to prove the following extension of~\eqref{des:asc}.

\begin{theorem}[Foata~1977]\label{foata}
For each $\pi\in\S_n$ let $\ides(\pi):=\des(\pi^{-1})$ be the number of {\bf\em i}nverse {\bf\em des}cents of $\pi$. Then,
\begin{equation}\label{dist:asc}
\sum_{\pi\in\S_n}s^{\ides(\pi)}t^{\DES(\pi)}=\sum_{e\in\I_n}s^{\dist(e)}t^{\ASC(e)}.
\end{equation}
\end{theorem}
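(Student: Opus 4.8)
The plan is to produce an explicit coding $\phi\colon \S_n\to\I_n$ that simultaneously realizes both statistics, i.e.\ one with $\DES(\pi)=\ASC(\phi(\pi))$ and $\ides(\pi)=\dist(\phi(\pi))$ for every $\pi\in\S_n$. The set-valued half is already handled by the natural coding $\Theta$ of the introduction, since $\DES(\pi)=\ASC(\Theta(\pi))$, so a first instinct is to hope that $\Theta$ works outright. It does not: for $\pi=2143$ one computes $\Theta(\pi)=(0,1,0,1)$, so that $\ASC(\Theta(\pi))=\{1,3\}=\DES(\pi)$ as promised, yet $\dist(\Theta(\pi))=1$ while $\ides(\pi)=\des(\pi^{-1})=2$. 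The obstruction is transparent: two distinct inverse descents of $\pi$ can be recorded by repeated values in $\Theta(\pi)$, and $\dist$ only counts each value once. The coding $\phi$ must therefore refine $\Theta$ so that distinct inverse descents always leave distinct positive values behind, without disturbing the ascent set.

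My first line of attack is to build $\phi$ recursively, passing from $\S_{n-1}$ to $\S_n$ and from $\I_{n-1}$ to $\I_n$ in parallel. On the inversion-sequence side the passage is simply appending a last letter $e_n\in\{0,1,\dots,n-1\}$: this creates the ascent $n-1$ exactly when $e_{n-1}<e_n$, and it raises $\dist$ by one exactly when $e_n$ is a positive value not already present. On the permutation side I would insert the new largest letter $n$; its placement controls the new descent, and, reading through $\pi^{-1}$ (where inserting $n$ at position $p$ appends $p$ after an order-preserving relabelling), it creates a new inverse descent precisely when the recorded position of $n-1$ exceeds $p$. The task is to match the $n$ insertion slots for $n$ with the $n$ choices of $e_n$ so that both the ascent/descent bookkeeping and the $\dist$/$\ides$ bookkeeping are respected at every step.

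Because the descent set is easy to control and the genuine difficulty is the numerical matching, I would isolate it by fixing the descent set: it suffices to prove, for every $S\subseteq[n-1]$, the refined identity
\begin{equation*}
\sum_{\substack{\pi\in\S_n\\ \DES(\pi)=S}} s^{\ides(\pi)}=\sum_{\substack{e\in\I_n\\ \ASC(e)=S}} s^{\dist(e)}.
\end{equation*}
Here the inversion sequences with $\ASC(e)=S$ are precisely those that weakly decrease along each maximal block determined by $S$ and strictly increase across $S$, so their $\dist$-enumeration can be organized block by block; on the other side the permutations with $\DES(\pi)=S$ are the linear extensions of the corresponding ribbon, on which $\ides$ is the secondary statistic. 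Establishing a weight-preserving bijection, or an equality of the two block-structured generating functions, between these two fixed-$S$ families is the crux. As a consistency check one can specialize all $t_i$ to a single $t$, whereupon the claim collapses to the classical bivariate equality $\sum_\pi s^{\ides(\pi)}t^{\des(\pi)}=\sum_e s^{\dist(e)}t^{\asc(e)}$; recovering this identity from the fixed-$S$ statement is a useful first milestone.

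The step I expect to be the main obstacle is exactly this matching of $\ides$ with $\dist$ while the ascent set is held fixed: the $\pi=2143$ example shows that the correspondence cannot be local in the positions of $\pi$, so one must track how newly introduced distinct values in the inversion sequence encode the global event ``$k$ lies to the right of $k+1$'' in $\pi$. I anticipate that the clean way to force distinct inverse descents to produce distinct values is to encode the permutation through a value-insertion process (inserting $1,2,\dots,n$ in increasing order and recording the insertion slots), since there a new inverse descent arises precisely when the incoming value lands to the left of its predecessor, which is naturally a ``new value'' event; reconciling this value-based code with the position-based ascent set of the target inversion sequence is where the real work lies.
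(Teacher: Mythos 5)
You have not proved the theorem: what you give is a correct setup followed by an explicit admission that the decisive step is missing. The two things you do establish are sound --- the counterexample $\pi=2143$ showing that the natural coding $\Theta$ fails for the pair $(\ides,\dist)$, and the reduction of the set-valued identity to the fixed-descent-set identities $\sum_{\DES(\pi)=S}s^{\ides(\pi)}=\sum_{\ASC(e)=S}s^{\dist(e)}$ for every $S\subseteq[n-1]$ --- but that family of identities \emph{is} the content of the theorem, and you yourself label its proof ``the crux'' and ``the main obstacle'' without supplying either a weight-preserving bijection or a computation of the two block-structured generating functions. (For the record, the paper does not reprove this result either: it quotes Foata's V-code and S-code constructions and points to Aas's alternative proof, so there is no in-paper argument your sketch could be completing.)

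The recursive insertion scheme you propose before the reduction also fails in ways worth making explicit, since they show the difficulty is structural rather than technical. First, inserting the largest letter $n$ into $\pi\in\S_{n-1}$ at an interior position scrambles the descent set instead of merely adjoining $n-1$ or not: inserting $3$ into $12$ at the front yields $312$ with $\DES(312)=\{1\}$, which is neither $\DES(12)=\emptyset$ nor $\emptyset\cup\{2\}$, whereas appending $e_n$ to an inversion sequence alters $\ASC$ only at position $n-1$; the two recursions are therefore not parallel. Second, even the numerical bookkeeping cannot be matched slot by slot: inserting $n$ raises $\ides$ for exactly $\pi^{-1}(n-1)$ of the $n$ slots (those to the left of the current maximum), while appending $e_n$ raises $\dist$ for exactly $n-1-\dist(e)$ of the $n$ choices, and these counts disagree already for $\pi=1324$, where four of the five slots raise $\ides$ but any $e\in\I_4$ with $\dist(e)=\ides(1324)=1$ admits only three values of $e_5$ raising $\dist$. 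Any successful argument must therefore be global: either a genuinely different coding (Foata's S-code, which inserts values and records a value-based statistic, is close in spirit to your closing remark, but reconciling it with the ascent set is precisely the whole construction) or an honest verification of the fixed-$S$ identity, neither of which appears in your proposal.
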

 
Recently, this result was rediscovered by Visontai~\cite{vi} during his study of a conjecture of Gessel about the 2-D $\gamma$-positivity of the {\em double Eulerian polynomial} $\sum_{\pi\in\S_n}s^{\ides(\pi)}t^{\des(\pi)}$.
Note that a new proof of~\eqref{dist:asc} essentially different from Foata's was provided by Aas~\cite{aas}.
 
Next we define the patterns in words. For two words $W=w_1w_2\cdots w_n$ and $P=p_1p_2\cdots p_k$  on $\N=\{0,1,2,\dots\}$, we say that {\em$W$ contains the pattern $P$} if there exist some indices $i_1<i_2<\cdots<i_k$ such that the subword $W'=w_{i_1}w_{i_2}\cdots w_{i_k}$ of $W$ is order isomorphic to $P$. Otherwise, $W$ is said to {\em avoid the pattern $P$}. For example, the word $W=32421$ contains the pattern $231$, because the subword $w_2w_3w_5=241$ of $W$ has the same relative order as $231$.
However, $W$ is $101$-avoiding. For a set $\mathcal{W}$ of words,
let $\mathcal{W}(P_1,\ldots,P_r)$ denote the set of words in $\mathcal{W}$ avoiding
patterns $P_1,\ldots,P_r$. Permutations and inversion sequences are viewed as words
in this paper so that patterns are well defined on them. 
 
Patterns in permutations and words have been extensively studied in the literature (see for instance~\cite{ki}). One classical result attributed to MacMahon and Knuth is that, for each pattern $\sigma\in\S_3$, $\S_n(\sigma)$ is enumerated by the {\em Catalan number} $C_n=\frac{1}{n+1}{2n\choose n}$. West~\cite{we} studied patterns of length 4 and showed that $(2413,3142)$-avoiding permutations, also known as {\em separable permutations}, are counted by the {\em (large) Schr\"oder numbers} 
$$
\{S_n\}_{n\geq1}=\{1, 2, 6, 22, 90, 394, 1860,\ldots\}_{n\geq1}.
$$
Kremer~\cite{kre} characterized that, among all permutation classes avoiding two patterns of length 4, there are only ten nonequivalent classes which have cardinality $S_n$, including separable permutations and $(2413,4213)$-avoiding permutations. Recently, motivated again by Gessel's $\gamma$-positivity conjecture above, Fu et al.~\cite{flz} proved that the descent polynomial $S_n(t):=\sum_{\pi\in\S_n(2413,3142)}t^{\des(\pi)}$ is $\gamma$-positive, which implies that the sequence of coefficients of $S_n(t)$ is {\em palindromic} and {\em unimodal}. 
Very recently, Corteel et al.~\cite{cor} proved that the Schr\"oder numbers also count $021$-avoiding inversion sequences. Moreover, they showed that the ascent polynomial $\sum_{e\in\I_n(021)}t^{\asc(e)}$ is palindromic via a connection with {\em rooted binary trees} with nodes colored by black or white. In fact, these two seemingly different classes of palindromic polynomials are the same, that is
\begin{equation}\label{eq:sep}
\sum_{\pi\in\S_n(2413,3142)}t^{\des(\pi)}=\sum_{e\in\I_n(021)}t^{\asc(e)}.
\end{equation}
It is this result that inspires us to find the following set-valued sextuple equidistribution, which is an extension of a restricted version of Theorem~\ref{foata}. 

Let us introduce the set-valued statistics, for each $\pi=\pi_1\pi_2\dots\pi_n\in\S_n$:
\begin{itemize}
\item $\VID(\pi):=\{2\leq i\leq n:\pi_i+1 \,\,\text{appears to the left of}\,\, \pi_i\}$, the {\bf v}alues of {\bf i}nverse {\bf d}escents of $\pi$; 
\item $\LMA(\pi):=\{i\in[n]:\pi_i>\pi_j\,\, \text{for all $1\leq j<i$}\}$, the positions of {\bf l}eft-to-right {\bf ma}xima of $\pi$;
\item $\LMI(\pi):=\{i\in[n]:\pi_i<\pi_j\,\, \text{for all $1\leq j<i$}\}$, the positions of {\bf l}eft-to-right {\bf mi}nima of $\pi$;
\item $\RMA(\pi):=\{i\in[n]:\pi_i>\pi_j\,\, \text{for all $j>i$}\}$, the positions of {\bf r}ight-to-left {\bf ma}xima of $\pi$;
\item $\RMI(\pi):=\{i\in[n]:\pi_i<\pi_j\,\, \text{for all $j>i$}\}$, the positions of {\bf r}ight-to-left {\bf mi}nima of~$\pi$;
\end{itemize}
and for each $e=e_1 e_2\dots e_n\in\I_n$:
\begin{itemize}
\item $\DIST(e):=\{2\leq i\leq n: e_i\neq0\,\,\text{and $e_i\neq e_j$ for all $j>i$}\}$, the positions of the last occurrence of {\bf dist}inct positive entries of $e$; 
\item $\ZERO(e):=\{i\in[n]: e_i=0\}$, the positions of {\bf zero}s in $e$;
\item $\EMA(e):=\{i\in[n]: e_i=i-1\}$, the positions of the {\bf e}ntries of $e$ that achieve {\bf ma}ximum;
\item $\RMI(e):=\{i\in[n]: e_i<e_j\,\, \text{for all $ j>i$}\}$, the positions of {\bf r}ight-to-left {\bf mi}nima of~$e$.
\end{itemize}
\begin{example}
Take $\pi=5\,3\,6\,8\,7\,4\,9\,1\,11\,12\,10\,2$ in $\S_{12}$ and
$e=(0,1,0,0,1,3,0,7,0,0,7,10)$ in $\I_{12}$.
We have $\DES(\pi)=\{1,4,5,7,10,11\}=\ASC(e)$, $\VID(\pi)=\{5,6,11,12\}=\DIST(e)$,
$\LMA(\pi)=\{1,3,4,7,9,10\}=\ZERO(e)$, $\LMI(\pi)=\{1,2,8\}=\EMA(e)$,
$\RMA(\pi)=\{10,11,12\}=\RMI(e)$ and $\RMI(\pi)=\{8,12\}$.
\end{example}

\begin{theorem}\label{thm:sex} 
There exists a bijection $\Psi:\I_n(021) \rightarrow \S_n(2413,4213)$ such that
$$
(\DIST,\ASC,\ZERO,\EMA,\RMI,\EXPO)e=(\VID,\DES,\LMA,\LMI,\RMA,\RMI)\Psi(e)
$$
for each $e\in\I_n(021)$, where $\EXPO(e)$ is the {\bf\em expo}sed positions of $e$ introduced in Definition~\ref{outline}.
\end{theorem}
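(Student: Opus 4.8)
The plan is to construct $\Psi$ recursively, by induction on $n$, so that deleting the last letter of $e\in\I_n(021)$ corresponds to deleting a single element of $\pi=\Psi(e)\in\S_n(2413,4213)$ and, conversely, so that the admissible ways of appending a new last letter to a sequence in $\I_{n-1}(021)$ are matched with the admissible ways of growing a permutation in $\S_{n-1}(2413,4213)$. The first task is to describe both classes under this deletion. On the inversion-sequence side, an elementary analysis of the pattern $021$ shows that a height $v$ may be appended to $e'=(e_1,\dots,e_{n-1})\in\I_{n-1}(021)$ exactly when no pair $i<j\le n-1$ satisfies $e_i<v<e_j$; the family of such admissible heights can be read off directly from $e'$, and I would identify it with the exposed positions of Definition~\ref{outline}, so that $\EXPO$ records precisely the active sites of the growth.

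On the permutation side I would set up the parallel growth on $\S_{n-1}(2413,4213)\to\S_n(2413,4213)$. Since a new element inserted as the last letter can only complete a forbidden configuration in the role of the last letter of $2413$ or of $4213$, the values that may be appended without creating either pattern are governed exactly by the right-to-left minima of $\pi'=\Psi(e')$. The core structural lemma is that these admissible insertion values are in order-reversing correspondence with the admissible heights above, a matching anchored by the last-letter rule $\pi_n=n-e_n$, which is readily checked on the running example where $e_{12}=10$ and $\pi_{12}=2$. Proving that the two systems of active sites have equal cardinality and can be aligned site-by-site is what forces $\EXPO(e)=\RMI(\Psi(e))$ and, at the same time, equips both classes with the common Schr\"oder succession rule.

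Granting the matched growth, the remaining work is to verify that one insertion step transforms all six set-valued statistics identically on the two sides. The pairs $\ZERO\leftrightarrow\LMA$ and $\EMA\leftrightarrow\LMI$ are the easiest: appending the height $0$ (respectively the maximal height $n-1$) makes the new letter a left-to-right maximum $\pi_n=n$ (respectively a left-to-right minimum $\pi_n=1$), and comparing the appended letter with its neighbour then yields the ascent/descent correspondence $\ASC\leftrightarrow\DES$, which specializes to the equidistribution underlying \eqref{eq:sep} and to the restricted case of Theorem~\ref{foata}. The genuinely delicate pairs are $\DIST\leftrightarrow\VID$ and $\RMI\leftrightarrow\RMA$, because neither set grows monotonically: appending a repeated positive height deletes the previously recorded last occurrence tracked by $\DIST$, and appending a letter can destroy earlier right-to-left extrema. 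I would handle these by tracking the symmetric-difference update of each set and checking that the same positions are simultaneously created and removed on both sides.

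I expect the principal obstacle to be precisely this simultaneous bookkeeping: establishing that a single insertion rule is forced, in the sense that admissible heights biject order-reversingly with admissible inserted values, while all six statistics, including the two non-monotone ones, are transported on the nose. The crux is the compatibility of the reverse-engineered statistic $\EXPO$ with the active sites of both growths; once Definition~\ref{outline} is arranged so that exposed positions mirror right-to-left minima, the shared Schr\"oder succession rule should close the induction, with the base case $n=1$ immediate.
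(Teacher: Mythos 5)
Your proposal is a last-letter insertion / generating-tree argument, which is essentially the construction the paper itself carries out in Section~\ref{sec:wilf}: the map $\Phi$ built from the operators $T_k$ and the sets $\AVA(\pi)$, governed by the Schr\"oder succession rule of Lemma~\ref{insert}. That construction, however, is only shown (and only claimed) to transport the quadruple $(\DES,\LMA,\LMI,\RMA)\mapsto(\ASC,\ZERO,\EMA,\RMI)$ — not the sextuple of Theorem~\ref{thm:sex}. The paper's actual proof of the sextuple statement is entirely different: it encodes $e$ as a two-colored Dyck path (the outline of Definition~\ref{outline}), runs a global line-drawing and labeling algorithm on that path, verifies the six correspondences step by step in Lemma~\ref{lem:lrm}, and establishes bijectivity via an explicit inverse algorithm. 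The two pairs you yourself flag as delicate, $\VID\leftrightarrow\DIST$ and $\RMI\leftrightarrow\EXPO$, are exactly where a last-letter recursion is in trouble: the paper's concluding remarks record that even the Baril--Vajnovszki coding $b$, which does restrict to a bijection between $\S_n(2413,4213)$ and $\I_n(021)$ and does achieve $\VID\mapsto\DIST$, still fails to send $\RMI$ to $\EXPO$. So deferring these two to ``symmetric-difference bookkeeping'' is not a routine verification omitted for brevity; it is the entire content of the theorem, and you give no insertion rule for which that bookkeeping could even be attempted.

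Beyond this, two concrete steps fail as written. First, you identify $\EXPO(e)$ with the set of active sites of the growth, but $\EXPO(e)$ is a set of \emph{positions} of $e$ (the theorem asserts $\EXPO(e)=\RMI(\Psi(e))$ as subsets of $[n]$), whereas the admissible appended heights form a set of \emph{values}, namely $\{0\}\cup\{\max(e'),\dots,n-1\}$; already for $e=(0,1,0,1,2,0,4)$ one has $\EXPO(e)=\{2,7\}$ of cardinality $2$, while there are five admissible heights for the next letter. Second, the claim that the admissible inserted values are ``governed exactly by the right-to-left minima of $\pi'$'' does not survive a cardinality check: the paper computes $\AVA(3142)=\{5>3>2>1\}$, of size four, while $3142$ has only two right-to-left minima. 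Without a correct description of the active sites on both sides, an explicit order-reversing matching between them, and a verified position-by-position update of $\VID$, $\DIST$, $\RMI$ and $\EXPO$ under one insertion, the induction has no content. If you want a proof along these lines you would in effect have to rediscover something like the outline construction; as it stands, the proposal establishes at most the weaker equidistribution~\eqref{restr:des} and its quadruple refinement, not Theorem~\ref{thm:sex}.
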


Since $|\VID(\pi)|=\ides(\pi)$ for any $\pi\in\S_n$, the above bijection implies the following restricted version of Theorem~\ref{foata}:
\begin{equation}\label{restrict}
\sum_{e\in\I_n(021)}s^{\dist(e)}t^{\ASC(e)}=\sum_{\pi\in\S_n(2413,4213)}s^{\ides(\pi)}t^{\DES(\pi)}.
\end{equation} 
Note that neither Foata's original bijection nor Aas' approach can be applied to prove this restricted
version. This equidistribution has two interesting applications:
\begin{itemize}
\item The calculation of the double Eulerian distribution $(\des,\ides)$ on $\S_n(2413,4213)$;
\item An interpretation of the $\gamma$-coefficients of $S_n(t)$ in terms of $021$-avoiding inversion
sequences via the {\em Foata--Strehl group action}~\cite{fsh} (see also~\cite{lz,pe}). 
\end{itemize}
Moreover, inspired by equidistributions~\eqref{eq:sep} and~\eqref{restrict}, we also characterize all
permutation classes that avoid two patterns of length $4$ whose descent distribution give the
palindromic polynomials $S_n(t)$.

The rest of this paper is organized as follows. In Section~\ref{sec:psi}, by introducing the outline of
an inversion sequence, we construct the algorithm $\Psi$. We show that $\Psi$ is bijective by giving
its inverse explicitly in Section~\ref{sec:inverse}. Two applications of $\Psi$ are presented in
Section~\ref{sec:app}. In Section~\ref{sec:wilf}, we investigate some $\des$-Wilf equivalences for
Schr\"oder classes avoiding two length $4$ patterns. Finally, we conclude the paper with some remarks. 
 
\section{The algorithm $\Psi$}\label{sec:psi}
Recall that a {\em Dyck path} of length $n$ is a lattice path in $\N^2$ from $(0,0)$ to
$(n,n)$ using the {\em east step} $(1,0)$ and the {\em north step} $(0,1)$, which does
not pass above the line $y=x$. The {\em height of an east step} in a Dyck path is
the number of north steps before this east step. It is clear that a Dyck path can be
represented as $d_1d_2\ldots d_n$, where $d_i$ is the height of its $i$-th east step.
For our purpose, we color each east step of a Dyck path by {\em black} or {\em red}.
We call such a Dyck path a {\em two-colored Dyck path}. If the $i$-th east step has
height $k$ and color red, then we write $d_i=\bar{k}$.
For example, the two-colored Dyck path in Figure~\ref{fig:outline} (right side)
can be written as $\bar{0}1\bar{1}12\bar{2}4$.

\begin{figure}
\begin{center}
\begin{tikzpicture}[scale=.6]
\draw[step=1,color=gray](0,1) grid (7,8);
\draw [very thick](1,2)--(2,2)(3,2)--(4,2)(4,3)--(5,3)(6,5)--(7,5);
\draw [very thick,color=red](0,1)--(1,1)(2,1)--(3,1)(5,1)--(6,1);
\draw [color=blue](0,1)--(7,8);
\draw(0.5,0.5) node{$0$};
\draw(1.5,0.5) node{$1$};
\draw(2.5,0.5) node{$0$};
\draw(3.5,0.5) node{$1$};
\draw(4.5,0.5) node{$2$};
\draw(5.5,0.5) node{$0$};
\draw(6.5,0.5) node{$4$};
\draw(9,5) node{$\mapsto$};
\draw(8.95,5.6) node{$d$};

\draw[step=1,color=gray](11,1) grid (18,8); 
\draw [color=blue](11,1)--(18,8);
\draw [very thick,color=red](11,1)--(12,1);
\draw [very thick](12,1)--(12,2)--(13,2);
\draw [very thick,color=red](13,2)--(14,2);
\draw [very thick](14,2)--(15,2)--(15,3)--(16,3);
\draw [very thick,color=red](16,3)--(17,3);
\draw [very thick](17,3)--(17,5)--(18,5)--(18,8);
\end{tikzpicture}
\end{center}
\caption{The outline of an inversion sequence}
\label{fig:outline}
\end{figure}
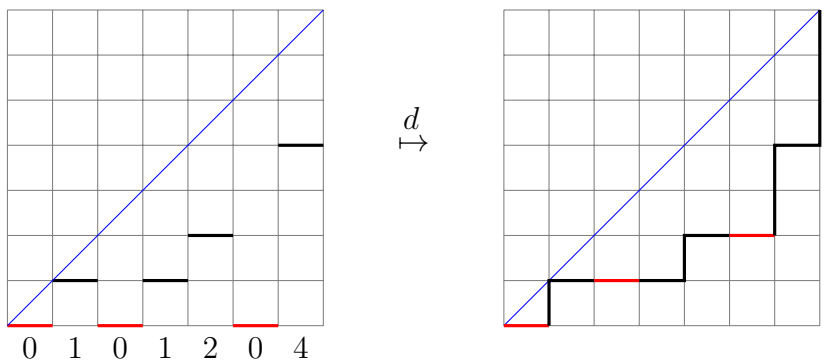

Note that an inversion sequence avoids $021$ if and only if its positive entries are weakly increasing.
This inspires the following geometric representation which is crucial in our construction of $\Psi$.

\begin{Def}[The {\em outline} of an inversion sequence]\label{outline} For each $e\in\I_n(021)$,
we associate it with a two-colored Dyck path $d(e)=d_1d_2\ldots d_n$, where the red east steps, indicated
by an overbar, correspond to the positions of zero entries of $e$ like this: 
$$
d_{i}=
\begin{cases}
\,\,e_i\quad&\text{if $e_i\neq0$},\\
\,\,\bar{k}\quad&\text{if $e_i=0$ and $k=\max\{e_1,\ldots,e_i\}$}.
\end{cases}
$$
For example, if $e=(0,1,0,1,2,0,4)\in\I_7(021)$, then $d(e)$ is the two-colored Dyck path in
Figure~\ref{fig:outline}. The two-colored Dyck path $d(e)$ is called the {\em outline} of $e$.
We introduce the {\em {\bf \em expo}sed positions} of $e=e_1e_2\dots e_n$ as 
$$
\EXPO(e):=\{i: i\notin\mathcal{C}(e) \text{ and $i-d_i<j-d_j$ for all $j>i$ }\},
$$
where $\bar{k}$ is just $k$ as a number and
$$
\mathcal{C}(e)=\{i:\text{$e_i=0 $ and there is $(a,b)$, $a<i<b$, satisfying $e_a=e_b\neq0$}\}.
$$
Continuing with our example, we have $\EXPO(e)=\{2,7\}$.
\end{Def}

It is obvious that two different $021$-avoiding inversion sequences can not have
the same outline, i.e., the geometric representation $d$ is an injection
from $\I_n(021)$ to two-colored Dyck paths of length $n$. Now we are ready to
present our algorithm $\Psi$ using this geometric representation. 

\vskip 0.1in

{\bf The algorithm $\Psi$}. Given $e\in\I_n(021)$, draw its outline
$d(e)=d_1d_2\ldots d_n$ on the $\N^2$ plane. For each $i\in[n]$, let $E_i$ denote
the $i$-th east step of $d(e)$. By {\em drawing a line} on $d(e)$, we mean drawing
a longest line segment which is parallel to the diagonal $y=x$ with two end points
in $\N^2$ and is contained in the region bounded by $d(e)$ and the diagonal.
For example, in the following
\begin{center}
\begin{tikzpicture}[scale=.5]
\draw[step=1,color=gray] (0,0) grid (7,7); 
\draw [color=blue](0,0)--(7,7);
\draw [very thick,color=red](0,0)--(1,0);
\draw [very thick](1,0)--(1,1)--(2,1);
\draw [very thick,color=red](2,1)--(3,1);
\draw [very thick](3,1)--(4,1)--(4,2)--(5,2);
\draw [very thick,color=red](5,2)--(6,2);
\draw [very thick](6,2)--(6,4)--(7,4)--(7,7);
\draw [color=blue](2,1)--(7,6);
\draw [color=blue](3,1)--(7,5);
\draw [color=blue](5,2)--(6,3);
\draw(7.4,7) node{$l_1$};
\draw(7.4,6) node{$l_2$};
\draw(7.4,5) node{$l_3$};
\draw(6.4,3) node{$l_4$};
\draw[step=1,color=gray] (12,0) grid (19,7); 
\draw [color=blue](12,0)--(19,7);
\draw [very thick,color=red](12,0)--(13,0);
\draw [very thick](13,0)--(13,1)--(14,1);
\draw [very thick,color=red](14,1)--(15,1);
\draw [very thick](15,1)--(16,1)--(16,2)--(17,2);
\draw [very thick,color=red](17,2)--(18,2);
\draw [very thick](18,2)--(18,4)--(19,4)--(19,7);
\draw [dashed,color=green](13,0)--(19,6);
\draw [dashed,color=green](15,1)--(18,4);
\end{tikzpicture}
\end{center}
all the blue lines are good for our purpose, but the two green (dashed) lines are not,
one of which is not contained in the region bounded by $d(e)$ and the diagonal and
the other is not of a maximal length.
We say a line {\em touches an east step} of $d(e)$ if the line meets the east step at
the initial, equivalently leftmost, point of the step. Moreover, a line is said to
begin at the leftmost east step that it touches. We also say that a line is to the left
or to the right of another line according to the location of the east steps they begin at.
For instance, the line $l_3$ in the graph above,
where the lines are $l_1,l_2,l_3,l_4$ from left to right, begins at $E_4$ which touches the east steps $E_4,E_5$ and $E_7$ of $d(e)$.

In our algorithm, we will draw all the lines on $d(e)$ in some specified order and
label the east steps touched by the lines successively with integers in $[n]$
according to two particular rules that we describe below:
\begin{itemize}
\item[(r-a)] A red east step can be labeled only if all the red east steps to its left are already labeled.
\item[(r-b)] Whenever an east step $E_j$ is labeled after an east step $E_i$ with $j<i$, we must finish labeling
all the other east steps left to $E_i$ before we can label any east step to the right of $E_i$.
\end{itemize}
When labeling an east step does not violate the above rules, the step is said to be~\emph{labelable}.

\begin{figure}[h!]
\scalebox{.65}{\includegraphics{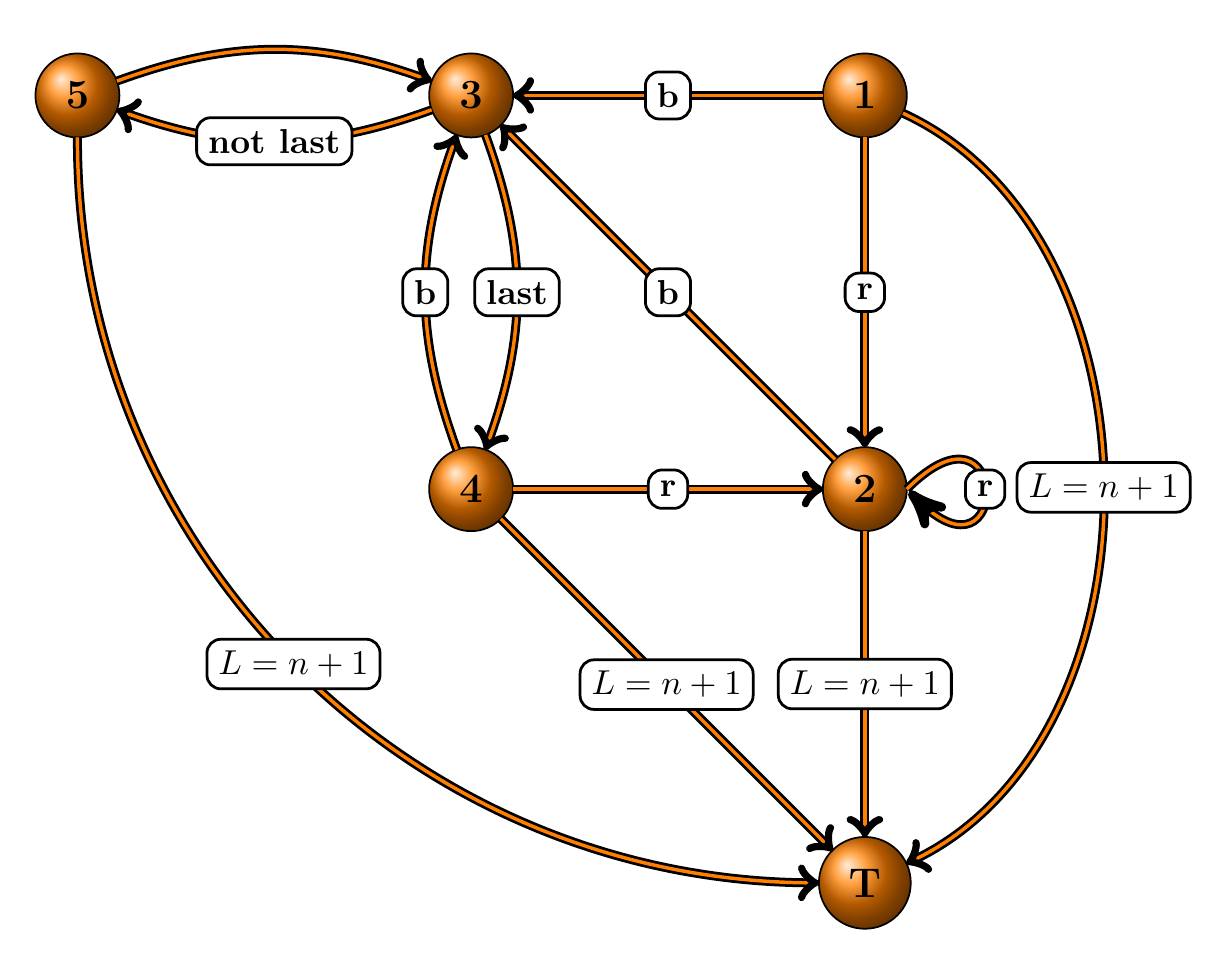}}
\caption{The flowchart of algorithm $\Psi$: {\bf r} (resp.~{\bf b}) means $E_P$ is
a red (resp.~black) east step, {\bf last} means $E_P$ is the last occurrence of
a black east step with the same height and {\bf T} means the algorithm terminates.
\label{flowchart}}     
\end{figure}

We now present step-by-step instructions to obtain $\Psi(e)\in\S_n(2413,4213)$ from
the outline $d(e)$ of $e\in\I_n(021)$. The labeling algorithm, where temporary
variables $L,H,P$ correspond to words \emph{label, height, position}, works as follows (see also the flowchart for $\Psi$ in Figure~\ref{flowchart}):
\begin{enumerate}
\item (Start) $L\leftarrow1$ (This means that $1$ is assigned to $L$);
draw the diagonal (line) $y=x$ on $d(e)$ and label the highest east step touched by the diagonal,
say $E_k$, with $L$; $L\leftarrow L+1$, $H\leftarrow d_k$, $P\leftarrow k$;
go to (2), if $E_P$ is a red east step (i.e.~$k=1$), otherwise go to~(3);
\item draw the leftmost new line that touches at least one unlabeled black east step or a \emph{labelable} red east step; label the highest east step touched by this new line, say $E_k$, with $L$; $L\leftarrow L+1$, $H\leftarrow d_k$, $P\leftarrow k$; go to (2), if $E_P$ is a red east step, otherwise go to~(3).
\item go to (5), if there is a black east step $E_j$ with $j>P$ and height $d_j=H$, otherwise go to (4);
\item move from $E_P$ along the two-colored Dyck path $d(e)$ to the left and along the lines that were
already drawn to the southwest until we arrive at the first unlabeled east step that is a black step
or a \emph{labelable} red step, say $E_k$; label $E_k$ with $L$;
$L\leftarrow L+1$, $H\leftarrow d_k$, $P\leftarrow k$; go to (2), if $E_P$ is a red east step, otherwise go to~(3); 
\item draw the leftmost line beginning at an east step right to $E_P$ which touches at least
one black east step; label the highest east step touched by this new line, say $E_k$, with $L$;
$L\leftarrow L+1$, $H\leftarrow d_k$, $P\leftarrow k$; go to (3);
\end{enumerate}
Finally, this algorithm stops when $L=n+1$ and $\Psi(e)$ is the resulting permutation $\pi_1\pi_2\ldots\pi_n$, where $\pi_i$ is the label of $E_i$. Clearly our algorithm $\Psi$ obeys the two rules (r-a) and (r-b). Therefore, the final permutation $\Psi(e)$ avoids both $2413$ and $4213$ in view of rule (r-b), and so $\Psi$ is well defined. 

\begin{figure}
\begin{center}
\begin{tikzpicture}[scale=.5]
\draw[step=1,color=gray] (0,1) grid (12,13); 
\draw [color=blue](0,1)--(12,13);
\draw [very thick,color=red](0,1)--(1,1);
\draw [very thick](1,1)--(1,2)--(2,2);
\draw [very thick,color=red](2,2)--(3,2)--(4,2);
\draw [very thick](4,2)--(5,2)--(5,4)--(6,4);
\draw [very thick,color=red](6,4)--(7,4);
\draw [very thick](7,4)--(7,8)--(8,8);
\draw [very thick,color=red](8,8)--(10,8);
\draw [very thick](10,8)--(11,8)--(11,11)--(12,11)--(12,13);
\draw [color=blue](8,8)--(12,12);
\draw [color=blue](2,2)--(7,7);
\draw [color=blue](3,2)--(7,6);
\draw [color=blue](4,2)--(5,3);
\draw [color=blue](6,4)--(7,5);
\draw [color=blue](10,8)--(11,9);
\draw [color=blue](9,8)--(11,10);

\draw(12.4,13) node{$l_1$};
\draw(12.4,12) node{$l_2$};
\draw(7.4,7) node{$l_4$};
\draw(7.4,6) node{$l_3$};
\draw(5.4,3) node{$l_5$};
\draw(7.4,5) node{$l_6$};
\draw(11.4,9) node{$l_7$};
\draw(11.4,10) node{$l_8$};

\draw(0.5,0.5) node{$5$};
\draw(1.5,0.5) node{$3$};
\draw(2.5,0.5) node{$6$};
\draw(3.5,0.5) node{$8$};
\draw(4.5,0.5) node{$7$};
\draw(5.5,0.5) node{$4$};
\draw(6.5,0.5) node{$9$};
\draw(7.5,0.5) node{$1$};
\draw(8.5,0.5) node{$11$};
\draw(9.5,0.5) node{$12$};
\draw(10.5,0.5) node{$10$};
\draw(11.5,0.5) node{$2$};

\draw[step=1,color=gray] (-16,1) grid (-4,13); 
\draw(-2,7) node{$\mapsto$};
\draw(-2,7.7) node{$\Psi$};
\draw [color=blue](-16,1)--(-4,13);

\draw [very thick](-15,2)--(-14,2)(-12,2)--(-11,2)(-11,4)--(-10,4)(-9,8)--(-8,8)(-6,8)--(-5,8)(-5,11)--(-4,11);

\draw [very thick,color=red](-16,1)--(-15,1)(-14,1)--(-12,1)(-10,1)--(-9,1)(-8,1)--(-6,1);

\draw(-15.5,0.5) node{$0$};
\draw(-14.5,0.5) node{$1$};
\draw(-13.5,0.5) node{$0$};
\draw(-12.5,0.5) node{$0$};
\draw(-11.5,0.5) node{$1$};
\draw(-10.5,0.5) node{$3$};
\draw(-9.5,0.5) node{$0$};
\draw(-8.5,0.5) node{$7$};
\draw(-7.5,0.5) node{$0$};
\draw(-6.5,0.5) node{$0$};
\draw(-5.5,0.5) node{$7$};
\draw(-4.5,0.5) node{$10$};
\end{tikzpicture}
\end{center}
\caption{An example of the algorithm $\Psi$}
\label{fig:Psi}
\end{figure}
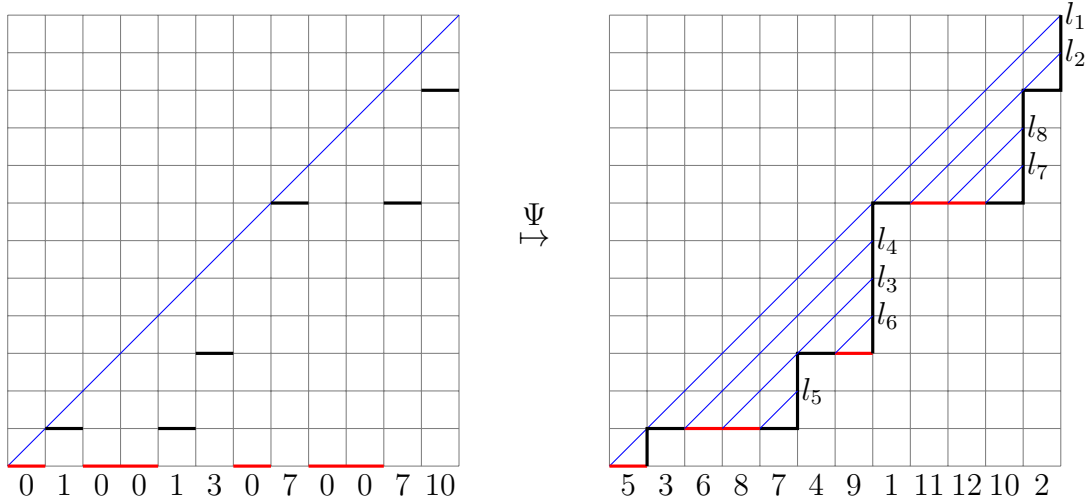

\begin{example}Let $e=(0,1,0,0,1,3,0,7,0,0,7,10)\in\I_{12}(021)$. In Figure~\ref{fig:Psi}, draw the outline $d(e)$ and apply the algorithm $\Psi$: 
\begin{itemize}
\item do (1), draw the line $l_1$ on $d(e)$ and label $E_8$ with $1$; 
\item do (5), draw the line $l_2$ and label $E_{12}$ with $2$; 
\item do (4), move from $E_{12}$ along the line $l_2$ to $E_9$ which violates (r-a), then move to $E_8$ which was already labeled, keep moving along the line $l_1$ until we arrive at $E_2$ and label it with $3$; 
\item do (5), draw $l_3$ and label $E_6$ with $4$; 
\item do (4), move along $l_3$ to $E_4$ which violates (r-a), then move to $E_3$ which still violates (r-a), further move to $E_2$ which was already labeled, finally, along $l_1$ we arrive at $E_1$ and label it with $5$; 
\item do (2), draw the line $l_4$ and label $E_3$ with $6$; 
\item do (2), draw the line $l_5$ and label $E_5$ with $7$; 
\item do (4), move from $E_5$ to $E_4$ which obeys (r-a) and is labeled with $8$; 
\item do (2), draw the line $l_6$ and label $E_7$ with $9$;
\item do (2), draw the line $l_7$ (although $l_8$ is left to $l_7$, it can not be drawn at this moment, because the only east step it touches is $E_{10}$, which is red and violates (r-a)) and label $E_{11}$ with $10$; 
\item do (4), move from $E_{11}$ to $E_{9}$ and label it with $11$; 
\item do (2), draw $l_8$ and label $E_{10}$ with $12$. 
\end{itemize}
The resulting permutation $\Psi(\pi)$ is $5\,3\,6\,8\,7\,4\,9\,1\,11\,12\,10\,2\in\S_{12}(2413,4213)$.
\end{example}

\section{The inverse algorithm $\Psi^{-1}$}
\label{sec:inverse}
We prove in this section that $\Psi$ is a bijection by constructing its inverse. First we show the following required properties of $\Psi$.

\begin{lemma}\label{lem:lrm}
Let $e\in\I_n(021)$ and $\pi=\Psi(e)\in\S_n(2413,4213)$. Then, for each $i\in[n]$:
\begin{enumerate}
\item $i$ is an ascent of $e$ if and only if $i$ is a descent of $\pi$;
\item $e_i\neq0$ does not occur in $e_{i+1}e_{i+2}\cdots e_n$ if and only if $i\in\VID(\pi)$;
\item $e_i=0$ if and only if $\pi_i$ is a left-to-right maximum of $\pi$;
\item $e_i=i-1$ if and only if $\pi_i$ is a left-to-right minimum of $\pi$;
\item $i$ is the position of a right-to-left minimum of $e$ if and only if
$\pi_i$ is a right-to-left maximum of $\pi$;
\item $i\in\EXPO(e)$ if and only if $\pi_i$ is a right-to-left minimum of $\pi$.
\end{enumerate}
\end{lemma}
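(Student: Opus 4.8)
The plan is to analyze the algorithm $\Psi$ directly, tracking how the geometric features of the outline $d(e)$ translate into positional statistics of the output permutation $\pi = \Psi(e)$. Since $\Psi$ is defined constructively via the labeling procedure, each of the six equivalences in the lemma should follow from a careful reading of what the algorithm does at each step, combined with structural invariants maintained throughout. I would begin by establishing a few such invariants: most importantly, that when an east step $E_i$ receives its label $\pi_i$, the height $d_i$ together with the color of $E_i$ determines the local behavior of $\pi$ around position $i$. The key observation is that the label $L$ increases by $1$ at every step, so the order in which east steps are labeled is exactly the order of values $1,2,\ldots,n$ being placed; thus the height of the east step at which a line begins, and the order of drawing lines, controls the relative magnitude of $\pi_i$.

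Next I would handle the six parts, roughly in increasing order of how directly they read off the algorithm. Parts (3) and (4) should be the cleanest: an entry $e_i=0$ corresponds to a \emph{red} east step (by the definition of the outline $d(e)$), and I would argue that a red step is always labeled as the highest step touched by a newly drawn line proceeding left-to-right, forcing $\pi_i$ to exceed all earlier labels, i.e.\ $\pi_i$ is a left-to-right maximum. Symmetrically, $e_i=i-1$ means $E_i$ is the lowest possible step (the outline touches the diagonal there), and I would show the step $(5)$/$(4)$ backtracking mechanism labels such steps exactly when $\pi_i$ is smaller than all preceding entries, giving a left-to-right minimum. For part (1), I would relate ascents of $e$ (i.e.\ $e_i < e_{i+1}$) to the heights $d_i, d_{i+1}$ and show that the labeling forces $\pi_i > \pi_{i+1}$ precisely in that case; this is essentially the set-valued refinement already promised by $\ASC(e) = \DES(\pi)$ in Theorem~\ref{thm:sex}, but I would prove it intrinsically from the algorithm so the lemma stands on its own.

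Parts (2), (5), and (6) are where the heart of the argument lies, since they involve the subtler statistics $\VID$, $\RMA$, $\RMI$ and the exposed positions. For part (2), I would show that $e_i \neq 0$ occurring for the last time (i.e.\ $i \in \DIST(e)$) corresponds exactly to the topmost occurrence of a given black height being the ``last'' such step, which the flowchart in Figure~\ref{flowchart} singles out via the \textbf{last} branch; I would then verify that this is the condition making $\pi_i + 1$ appear to the left of $\pi_i$, namely $i \in \VID(\pi)$. For part (5), I would characterize right-to-left minima of $e$ as positions where $E_i$ begins a line reaching the far upper-right corner of the outline, and argue these are labeled early enough that $\pi_i$ dominates everything to its right, i.e.\ $\pi_i \in \RMA(\pi)$. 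Part (6) requires unwinding the definition of $\EXPO(e)$ as those positions $i \notin \mathcal{C}(e)$ minimizing $i - d_i$ among later positions; geometrically these are the steps lying on the lower boundary of the reachable region, and I would match them to the positions labeled last within each backtracking phase, which are precisely the right-to-left minima of $\pi$.

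The main obstacle I anticipate is part (6) together with the interaction of rule (r-a) with the set $\mathcal{C}(e)$ of ``caged'' zero positions. The definition of $\EXPO$ deliberately excludes $\mathcal{C}(e)$, and the backtracking in steps (4)/(5) repeatedly skips red steps that violate (r-a); making rigorous that these two exclusions coincide --- that a zero entry is caged if and only if its east step is bypassed during backtracking and hence fails to be a right-to-left minimum --- will require a careful induction on the phases of the algorithm, tracking exactly which steps remain unlabeled and which lines have been drawn at each moment. I expect to need a clean inductive hypothesis stating that, after each labeled step, the set of already-labeled positions forms an interval-like structure compatible with the outline's geometry, so that the geometric condition $i - d_i < j - d_j$ for all $j>i$ translates verbatim into the combinatorial condition $\pi_i < \pi_j$ for all $j > i$.
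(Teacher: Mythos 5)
Your overall strategy---reading each of the six equivalences directly off the labeling algorithm, doing (3) and (4) first, and isolating part (6) and the role of $\mathcal{C}(e)$ as the delicate point---is exactly the route the paper takes. But as written your text is a roadmap rather than a proof: every substantive step is deferred with ``I would show'' or ``I expect to need,'' and the content that actually carries the paper's argument is absent. Concretely, what is missing is: for (1), the explicit case analysis on $(e_i,e_{i+1})$ (the paper splits $i\in\ASC(e)$ into the three cases $e_i=0<e_{i+1}$ with $d_i=e_{i+1}$, $e_i=0<e_{i+1}$ with $d_i<e_{i+1}$, and $0<e_i<e_{i+1}$, and similarly for $i\notin\ASC(e)$); for (3) and (5), the structural claim that for a maximal run $e_{i+1},\dots,e_j$ of positive (hence weakly increasing) entries following $e_i=0$, each constant block $e_{a+1}=\cdots=e_b$ yields $\pi_{a+1}<\cdots<\pi_b<\pi_a$, from which $\pi_i>\pi_{i'}$ for all $i<i'\le j$ and then (r-a) finishes the argument; and for the converse direction of (6), the split into the two failure modes $\exists\, j>i$ with $d_i\le d_j+i-j$ (a line through or left of $E_i$ reaches a higher black step to the right) versus $i\in\mathcal{C}(e)$ (a line to the right of $l$ is drawn before $l$). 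Your anticipated ``interval-like structure of labeled positions'' invariant is not needed if one argues via these local claims, and you never state it precisely enough to judge whether it would close the gap.

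There are also two directional slips that would derail the argument if carried into a full write-up. For (3) you say the red step's label ``exceeds all earlier labels''---that is vacuous, since labels are assigned in increasing order; what must be shown is that $\pi_i$ exceeds $\pi_j$ for all \emph{positions} $j<i$, which is a statement about when $E_i$ is labeled relative to the steps to its \emph{left}, and in the paper it is actually deduced by showing $E_i$ dominates the black steps to its \emph{right} within its run and then invoking (r-a) for the red steps. For (5) you say the relevant steps are ``labeled early enough that $\pi_i$ dominates everything to its right''; since $\pi_i$ \emph{is} the label, domination means $E_i$ is labeled \emph{later} than every $E_j$ with $j>i$, not earlier. Neither slip is fatal to the plan, but both indicate that the translation between ``label order'' and ``value comparison,'' which is the engine of the whole proof, still needs to be pinned down before any of the six parts is actually established.
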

\begin{proof} Let $d(e)=d_1d_2\ldots d_n$ be the outline of $e$ and $E_i$ the $i$-th step of $d(e)$ for each $i\in[n]$. 
We establish the above assertions one by one.
\begin{enumerate}[(1)]
\item If $i\in\ASC(e)$, then we need to consider three different cases:
\begin{itemize}
\item $e_i=0,e_{i+1}>0$ and $d_i=e_{i+1}$;
\item $e_i=0,e_{i+1}>0$ and $d_i<e_{i+1}$;
\item $0<e_i<e_{i+1}$.
\end{itemize}
One can check easily that in either case, step $E_i$ is labeled after $E_{i+1}$ by algorithm $\Psi$, i.e. $i\in\DES(\pi)$.
On the other hand, if $i\notin\ASC(e)$, then again one can check routinely that in all cases below
\begin{itemize}
\item $e_{i}=e_{i+1}=0$,
\item $e_i\neq0, e_{i+1}=0$ or
\item $e_i=e_{i+1}\neq0$,
\end{itemize}
step $E_i$ is labeled before $E_{i+1}$ by algorithm $\Psi$, i.e. $i\notin\DES(\pi)$.
\item Clearly, $i\in\DIST(e)$ if and only if procedure (4) is called from procedure (2) with $P=i$ in algorithm $\Psi$
for $d(e)$, and procedure (4) is called with $P=i$ and $L=j$ if and only if $j-1$ is the label for $E_{i}$ and $j$ becomes the label of an east step to the left of $E_i$.
\item
Let $e_i=0$. We may assume that $i<n$ and $e_{i+1}>0$. Let $j$ be the largest index such that
$e_k>0$ for all $k$, $i<k\leq j$. Since $e$ is $021$-avoiding, the entries $e_{i+1},e_{i+2},\ldots,e_j$ are all positive and weakly increasing. For any pair $(a,b)$ with $i\leq a<b\leq j$ satisfying 
$$
e_a<e_{a+1}=e_{a+2}=\cdots=e_b\neq e_{b+1},
$$
we claim that $\pi_b<\pi_a$ and $\pi_{a+1}<\pi_{a+2}<\cdots<\pi_{b}$. The latter statement is clear from the algorithm $\Psi$ and $\pi_b<\pi_a$ follows from the the fact that $E_a$ is labeled after $E_{a+1}$ and the observation that all east steps before $E_{a+1}$ which are labeled after $E_{a+1}$ must be labeled after $E_{a+2},\ldots,E_{b}$. 
It then follows from the above discussion that $\pi_i>\pi_{i'}$ for any $i<i'\leq j$. Therefore, (3) is true in view of rule (r-a) of $\Psi$.

\item In the algorithm, $e_i=i-1$ if and only if step $E_i$ is touched by the diagonal line $y=x$, equivalently, $\pi_i$ is a left-to-right minimum of $\pi$.
\item Let $i$ be the largest such that $e_i=0$. Applying the claim in (3) gives (5).
\item If $i\in\EXPO(e)$, then neither the line $l$ that touches $E_i$ nor the lines to the left of $l$ will touch an east step to the right of $E_i$. This shows $\pi_i$ is a right-to-left minimum of $\pi$. 

Conversely, if $i\notin\EXPO(e)$, then we need to distinguish two cases. Clearly, if there exists $j>i$ such that $d_i\leq d_j+i-j$, then either the line $l$ that touches $E_i$ or a line to the left of $l$ will touch a black east step to the right of and higher than $E_i$, which forces $i\notin\RMI(\pi)$. Otherwise, $i\in\mathcal{C}(e)$, then either the line $l$ that touches $E_i$ will touch a higher black east step or a line to the right of $l$ will be drawn before $l$, which again forces $i\notin\RMI(\pi)$. 
\end{enumerate}
\end{proof}

The following definition is important in constructing the algorithm $\Psi^{-1}$.
\begin{Def}[Big jump]
For a permutation $\pi\in\S_n$ with 
$$\LMA(\pi)=\{1=i_1<i_2<\cdots <i_k\},$$
an index $i_j\in\LMA(\pi)$ ($2\leq j\leq k$) is called a {\em big jump} of $\pi$ if
$\pi_{i_j}-\pi_{i_{j-1}}>1$. Denote by $\BJP(\pi)$ the set of all {\bf b}ig {\bf j}um{\bf p}s of $\pi$.
For example, if $\pi=5\,3\,6\,8\,7\,4\,9\,1\,11\,12\,10\,2\in\S_{12}$, then
$\LMA(\pi)=\{1,3,4,7,9,10\}$ and $\BJP(\pi)=\{4,9\}$.
\end{Def}

{\bf The algorithm $\Psi^{-1}$}. As before, all lines we will draw below are parallel to the diagonal $y=x$.
For a line $l: y=x-k$ ($0\leq k\leq n-1$) on $\N^2$, when we say drawing the $j$-th ($1\leq j\leq n$)
east step that touches the line $l$, we mean drawing the east step from $(j-1,j-1-k)$ to $(j,j-1-k)$.
Normally, for $i<j$ let $(i,j]:=\{i+1,i+2,\ldots,j\}$. 

Given a permutation $\pi=\Psi(e)$ for some $e\in\I_n(021)$, we can recover $d(e)=d_1\cdots d_n$ as follows.
Note that the $i$-th east step of $d(e)$ is red if and only if $i\in\LMA(\pi)$, by (3) of Lemma~\ref{lem:lrm}. We are going to determine  the height $d_{\pi^{-1}(1)}, d_{\pi^{-1}(2)},\ldots,d_{\pi^{-1}(n)}$ successively using the following algorithm: 
\begin{enumerate}[(I)]
\item (Start) $L\leftarrow1$; let $j=\pi^{-1}(L)$, draw the diagonal (line) $y=x$
(on $\N^2$) and the $j$-th east step (red or black) that touches this diagonal;
$L\leftarrow L+1$; go to (II), if $j\in\VID(\pi)$, otherwise go to (III);
\item let $j=\pi^{-1}(L)$ and draw the $j$-th east step (red or black) with $d_j$ smallest possible
such that it touches a line already drawn; $L\leftarrow L+1$;
go to (II), if $j\in\VID(\pi)$, otherwise go to (III);
\item go to (V), if $\pi^{-1}(L)\in\LMA(\pi)$, otherwise go to (IV);
\item let $i=\pi^{-1}(L-1)$, $j=\pi^{-1}(L)$ and 
$d=\max\{d_k: \text{$i\leq k<j$ and $\pi_k<L$}\}$, ignoring undetermined $d_k$'s;
let $m$ be the smallest integer satisfying that
$$
m\in(i,j]\setminus\left(\LMA(\pi)\setminus\BJP(\pi)\right)
\text{ and there is no line from $(m-1,d)$;}
$$
draw the line starting in $(m-1,d)$ and draw the $j$-th east step (black) that touches this line;
$L\leftarrow L+1$; go to (II), if $j\in\VID(\pi)$, otherwise go to (III);
\item let $j=\pi^{-1}(L)$ and draw the $j$-th east step (red) with $d_j=\max\{d_1,d_2,\ldots,d_{j-1}\}$;
$L\leftarrow L+1$; go to (III).
\end{enumerate}

The above algorithm ends when $L=n+1$ and we recover the outline $d(e)$ of $e$. For example, if we apply
the algorithm $\Psi^{-1}$ to $\pi=5\,3\,6\,8\,7\,4\,9\,1\,11\,12\,10\,2\in\S_{12}(2413,4213)$,
then we will successively draw the lines $l_1,l_2,l_3,l_5,l_7$ and recover the corresponding outline
(see Figure~\ref{fig:Psi}). Note that the lines touching only a red east step (e.g.~lines $l_4,l_6,l_8$
in Figure~\ref{fig:Psi}), which are unnecessary, will not be drawn during the procedure.
These lines are exactly the lines beginning with a red east step which is not a big jump step
(the $i$-th step is called a big jump step if $i\in\BJP(\pi)$). At each big jump step there must be
a line drawn in procedure (IV) of $\Psi^{-1}$ (with $m\in\mathcal{B}_{i,j}$), which corresponds to
a line drawn in procedure (2) of $\Psi$ that begins at a red east step and touches at least one black
east step. By the above discussion, one can check without difficulty that $\Psi^{-1}(\Psi(e))=e$ for
each $e\in\I_n(021)$, i.e.~$\Psi$ is injective. Therefore, $\Psi$ is a bijection
(due to cardinality reason) with all the desired properties as shown in Lemma~\ref{lem:lrm},
which completes the proof of Theorem~\ref{thm:sex}.

\section{Two applications}
\label{sec:app}
\subsection{Double Eulerian distribution on $\S_n(2413,4213)$}

\label{sec:distri}
One advantage of our bijection $\Psi$ is the calculation of the double Eulerian distribution
$(\des,\ides)$ on $\S_n(2413,4213)$ via the natural structure of two-colored Dyck paths. 
\begin{Def}
 For a two-colored Dyck path $D$, we introduce the following four statistics:
\begin{itemize}
\item $\turn(D)$, the number of {\em{\bf\em turn}s} of $D$ minus $1$, where a turn is an east step
that is followed immediately by a north step;
\item $\segm(D)$, the number of {\em black {\bf\em segment}s} of $D$, where a segment is a maximal
string of consecutive black east steps with the same height;
\item $\Red(D)$, the number of {\em{\bf\em red} east steps} of $D$;
\item $\return(D)$, the number of {\em{\bf\em return}s} of $D$, i.e. the number of times that $D$
touches the diagonal $y=x$ after the first east step.
\end{itemize}
For example, for the two-colored Dyck path
$D=\bar{0}\bar{0}2\bar{2}2\bar{2}57\bar{7}\bar{7}77$,
we have $\turn(D)=3$, $\segm(D)=5$, $\Red(D)=6$ and $\return(D)=3$. 
\end{Def}

It is clear from the definition of outline that a two-colored Dyck path is
an outline of an inversion sequence if and only if
\begin{itemize}
\item[(c-1)] all the east steps of height $0$ are colored red, and
\item [(c-2)] the first east step of each positive height is colored black.
\end{itemize}
Denote by $\mathcal{A}_n$ the set of all two-colored Dyck paths of length $n$ that satisfy
the above two conditions.
The following Lemma~\ref{lem:outl} is obvious from the definition of the injection $d$. 

\begin{lemma}\label{lem:outl}
The geometric representation of inversion sequences $d:\I_n(021)\rightarrow\mathcal{A}_n$
is a bijection such that for each $e\in\I_n(021)$,
\begin{equation}
(\dist,\asc,\zero,\ema)(e)=(\turn,\segm,\Red,\return)(d(e)).
\end{equation}
\end{lemma}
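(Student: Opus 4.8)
The plan is to hang all four statistics on the single clean description of the outline that the $021$-avoidance forces. Since the positive entries of $e$ are weakly increasing, the height of the $i$-th east step is exactly the running maximum: $d_i=\max\{e_1,\dots,e_i\}$ (when $e_i\neq0$ this running maximum is $e_i$ itself, and when $e_i=0$ it is the value $k$ in the definition). First I would record the immediate consequences of this formula: the heights satisfy $0=d_1\le d_2\le\cdots\le d_n$ with $d_i\le i-1$, so $d(e)$ is a genuine Dyck path; condition (c-1) holds because $d_i=0$ forces $e_i=0$ (red); and condition (c-2) holds because the first east step attaining a new positive height $h$ must have $e_i=h\neq0$ (black). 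Hence $d(e)\in\mathcal{A}_n$.

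For bijectivity I would give the inverse explicitly: for $D\in\mathcal{A}_n$ with heights $d_1,\dots,d_n$, set $e_i:=d_i$ if $E_i$ is black and $e_i:=0$ if $E_i$ is red. Conditions (c-1)--(c-2) guarantee that the positive $e_i$ are precisely the black heights, hence weakly increasing (so $e\in\I_n(021)$), and that the running maximum of $e$ equals $d_i$ at every step; this is exactly what is needed to verify $d(e)=D$. So $d$ is a bijection onto $\mathcal{A}_n$.

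Next I would settle the four equidistributions, each reduced to a one-line observation by the running-maximum formula. The identity $\zero(e)=\Red(d(e))$ is immediate from the coloring rule. For $\dist=\turn$, the distinct positive entries of $e$ are exactly the distinct positive heights of $D$ (every positive height is introduced by a black step, by (c-2)), while the turns of $D$ biject with its distinct heights (the last east step at each height level is followed by a north step, and no other east step is, since $d_n\le n-1$ makes even $E_n$ a turn); thus $\turn(D)$, that count minus one, equals the number of distinct positive heights, i.e.\ $\dist(e)$. For $\asc=\segm$, I would check that $i$ is an ascent of $e$ exactly when $E_{i+1}$ opens a new black segment: if $e_i<e_{i+1}$ then $e_{i+1}>0$ is black while $E_i$ is either red or black of strictly smaller height, so $E_{i+1}$ begins a fresh segment; conversely every black segment starts at some $E_j$ with $j\ge2$ (as $E_1$ is red) whose predecessor is red or lower-black, yielding the ascent $j-1$. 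The shift $i\mapsto i+1$ is then the required bijection.

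Finally, for $\ema=\return$ I would use the arch decomposition. Because $e_k\le k-1$ with equality possible only at $k=i$, one has $e_i=i-1$ if and only if $d_i=i-1$, i.e.\ the path sits at the diagonal point $(i-1,i-1)$ just before its $i$-th east step, equivalently $E_i$ opens a new maximal Dyck arch. Since the arches of $D$ are in bijection with its returns (each arch ends at exactly one return to $y=x$, the last arch at the endpoint $(n,n)$), the number of arch-opening east steps equals $\return(D)$, giving $\ema(e)=\return(D)$. I expect the boundary bookkeeping of this last identity to be the main obstacle: position $1$ always lies in $\EMA$ and the endpoint $(n,n)$ is always a return, so one must confirm that these two ``free'' contributions correspond to each other (the first arch opens at $E_1$ and closes the count at $(n,n)$) rather than being double-counted. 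Organizing the argument around arches rather than around a positional shift is precisely what makes this transparent.
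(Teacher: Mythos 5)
Your proof is correct and is exactly the direct verification the paper leaves out: the paper offers no argument for Lemma~\ref{lem:outl} beyond declaring it ``obvious from the definition of the injection $d$,'' and your running-maximum observation $d_i=\max\{e_1,\dots,e_i\}$ together with the four case checks (including the careful matching of the free contributions $1\in\EMA(e)$ and the terminal return at $(n,n)$) supplies precisely the intended details.
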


Lemma~\ref{lem:outl} and Theorem~\ref{thm:sex} together give the following interpretations 
for the four variable extension of $S_n(t)$:
\begin{align*}
S_n(s,t,u,v)&:=\sum_{D\in\mathcal{A}_n}s^{\turn(D)}t^{\segm(D)}u^{\Red(D)}v^{\return(D)}\\
&\,\,=\sum_{e\in\I_n(021)}s^{\dist(e)}t^{\asc(e)}u^{\zero(e)}v^{\ema(e)}\\
&\,\,=\sum_{\pi\in\S_n(2413,4213)}s^{\ides(\pi)}t^{\des(\pi)}u^{\lma(\pi)}v^{\lmi(\pi)}.
\end{align*}
In order to compute the generating function for $S_n(s,t,u,v)$, we further need to introduce two related polynomials:
\begin{align*}
B_n(s,t,u,v)&:=\sum_{D\in\mathcal{B}_n}s^{\turn(D)}t^{\segm(D)}u^{\Red(D)}v^{\return(D)}\\
\text{and}\quad R_n(s,t,u,v)&:=\sum_{D\in\mathcal{R}_n}s^{\turn(D)}t^{\segm(D)}u^{\Red(D)}v^{\return(D)},
\end{align*}
where $\mathcal{B}_n$ (resp.~$\mathcal{R}_n$) is the set of all two-colored Dyck paths of length $n$
satisfying condition~(c-2) and whose first step is black (resp.~red). We have the following system of
algebraic equations regarding the three generating functions $S(s,t,u,v;z)$, $B(s,t,u,v;z)$ and $R(s,t,u,v;z)$: 
\begin{align*}
S(s,t,u,v;z)&:=\sum_{n\geq1}S_n(s,t,u,v)z^n=uvz+(stuv^2+u^2v)z^2+\cdots\\
B(s,t,u,v;z)&:=\sum_{n\geq1}B_n(s,t,u,v)z^n=tvz+(st^2v^2+tv+tuv)z^2+\cdots\\
R(s,t,u,v;z)&:=\sum_{n\geq1}R_n(s,t,u,v)z^n=uvz+(stuv^2+tuv+u^2v)z^2+\cdots.
\end{align*}

\begin{lemma}
The following system of algebraic equations hold:
\begin{align}
S&=uvz(1+S|_{v=1})(1+sB),\label{eq:GS1}\\
B&=vz(t+tR|_{v=1}+B|_{v=1})(1+sB)\quad\text{and}\label{eq:GS2}\\
R&=uvz(1+R|_{v=1}+B|_{v=1})(1+sB).\label{eq:GS3}
\end{align}
\end{lemma}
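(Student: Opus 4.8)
The plan is to prove the three identities \eqref{eq:GS1}--\eqref{eq:GS3} simultaneously by a \emph{first-return decomposition} of two-colored Dyck paths relative to the diagonal $y=x$. Every nonempty path $D$ of length $n$ begins with an east step and factors uniquely as $D=E\cdot D'\cdot N\cdot D''$, where $E$ is the initial (height $0$) east step, $N$ is the north step realizing the first return to the diagonal at a point $(k,k)$, the inner path $D'$ is the portion strictly enclosed by the first arch (translated to a genuine two-colored Dyck path of length $k-1$), and $D''$ is the remainder from $(k,k)$ to $(n,n)$ (translated to start at the origin). The first task is to record which classes $D'$ and $D''$ range over. Since the enclosed east steps all have height at most $k-2$ while every east step of $D''$ has height at least $k$, condition~(c-2) for $D$ forces the first step of $D''$ to be black; hence $D''$ is empty or lies in $\mathcal{B}_{n-k}$ in all three cases. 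For $D'$, condition~(c-2) for $D$ is equivalent to (c-2) for $D'$, so $D'$ is unrestricted among paths satisfying (c-2); the extra condition~(c-1), imposed only in $\mathcal{A}_n$, forces the enclosed height-$0$ steps to be red and pins $D'$ down to $\mathcal{A}_{k-1}$, whereas for $\mathcal{B}_n$ and $\mathcal{R}_n$ the inner path is free to lie in $\mathcal{B}_{k-1}\cup\mathcal{R}_{k-1}$. Because the lengths $1,k-1,n-k$ of $E,D',D''$ add up to $n$, translating this bijection into generating functions makes the weights of the three pieces multiply.

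Next I would track the four statistics across the decomposition; $\Red$ and $\return$ are the easiest. Red steps are additive, so the colour of $E$ contributes the factor $u$ exactly when $E$ is red (hence $uvz$ in \eqref{eq:GS1} and \eqref{eq:GS3}, but only $vz$ in \eqref{eq:GS2}), while the reds of $D'$ and $D''$ are carried by the inner and outer generating functions. For $\return$, the first arch contributes precisely one return (at $(k,k)$), and the returns internal to $D'$ are returns to the shifted diagonal $y=x-1$ and therefore invisible to $D$; this is exactly what forces the substitution $v=1$ in every inner occurrence of $S,B,R$ together with the single factor $v$ attached to the initial step, while $D''$ retains its full $\return$-weight and so appears undamped in the outer factor.

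The segment statistic $\segm$ is where the three inner factors genuinely differ, and I expect the merging of black segments at the head of the arch to be the delicate point. Black segments are additive across the arch/$D''$ junction, since the enclosed heights and the $D''$ heights are disjoint and separated by $N$. Inside the arch, however, $E$ sits at height $0$ and is adjacent to the first step of $D'$, which also has height $0$. When $E$ is red (classes $\mathcal{A}_n,\mathcal{R}_n$) it begins no black segment, so $\segm$ of the arch equals $\segm(D')$ irrespective of the colour of $D'$, giving the clean inner factors $1+S|_{v=1}$ and $1+R|_{v=1}+B|_{v=1}$. When $E$ is black (class $\mathcal{B}_n$) it contributes a black segment that \emph{merges} with the first segment of $D'$ precisely when $D'$ begins with a black step: an empty $D'$ leaves the lone segment $E$ (the term $t$), a $D'$ beginning red keeps $E$ as a separate segment (the term $t\,R|_{v=1}$), and a $D'$ beginning black absorbs $E$ into its first segment and so contributes no extra $t$ (the term $B|_{v=1}$). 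This is what produces the asymmetric inner factor $t+tR|_{v=1}+B|_{v=1}$ of \eqref{eq:GS2}.

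Finally I would handle $\turn$, which explains the common outer factor $1+sB$. A turn is an east step immediately followed by a north step, so the number of turns is additive across the decomposition, and the initial step $E$ creates no new turn unless $D'$ is empty; hence the first-arch factor carries exactly $s^{(\#\text{turns of the arch})-1}$, which is already supplied by the inner generating functions (and by the constant $1$ when $D'$ is empty). The genuine subtlety is the global shift in $\turn=\#\text{turns}-1$: both the arch-part and a nonempty $D''$ each subtract one, whereas $D$ should subtract only one in total, so an extra factor $s$ must be inserted exactly when $D''$ is nonempty---this is the source of the $s$ multiplying $B$ in $1+sB$, while the empty-$D''$ term $1$ needs no correction. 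Assembling the four weights over the decomposition then yields \eqref{eq:GS1}--\eqref{eq:GS3}, and a check against the displayed initial terms confirms the constants. The main obstacle I anticipate is purely bookkeeping: verifying that no black segment and no turn is spuriously created or destroyed at the two junctions, and carefully separating the three merging cases at the head of the arch that generate the asymmetry of \eqref{eq:GS2}.
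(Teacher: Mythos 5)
Your proposal is correct and follows essentially the same route as the paper: the unique first-return decomposition of a two-colored Dyck path into the initial east step, the inner path $D_1\in\mathcal{A}_{k-1}$ (resp.\ $\mathcal{B}_{k-1}\cup\mathcal{R}_{k-1}$) and the tail $D_2\in\mathcal{B}_{n-k}$, together with exactly the same bookkeeping identities for $\turn$, $\segm$, $\Red$ and $\return$ (the segment-merging case analysis giving $t+tR|_{v=1}+B|_{v=1}$, the $\chi(k\neq n)$ correction giving $1+sB$, and the killed inner returns giving the $v=1$ substitutions). The only difference is presentational: the paper writes out the details for \eqref{eq:GS2} and declares the other two equations similar, whereas you treat all three uniformly.
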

\begin{proof}
We will only show~\eqref{eq:GS2}, the other two equations are similar. For convenience, we use the
convention that $\mathcal{B}_{0}$ and $\mathcal{R}_{0}$ both contain only the empty two-colored Dyck path. 

Each two-colored Dyck path $D=d_1\cdots d_n\in\mathcal{B}_n$ can be decomposed uniquely into a pair
$(D_1,D_2)$ of two-colored Dyck paths, where $D_1=d_2d_3\cdots d_k\in\mathcal{B}_{k-1}\cup\mathcal{R}_{k-1}$
and $D_2=(d_{k+1}-k)(d_{k+2}-k)\cdots (d_n-k)\in\mathcal{B}_{n-k}$ with
$k=\min\{i\in[n]:d_{i+1}=i\text{ or $i=n$}\}$. This decomposition is reversible and satisfies
the following properties: 
\begin{align*}
&\turn(D)=\turn(D_1)+\turn(D_2)+\chi(k\neq n),\\
&\segm(D)=\segm(D_1)+\segm(D_2)+\chi(D_1\in\mathcal{R}_{k-1}),\\
&\Red(D)=\Red(D_1)+\Red(D_2),\\
&\return(D)=1+\return(D_2),
\end{align*}
where $\chi(\mathsf{S})$ equals $1$, if the statement $\mathsf{S}$ is true; and $0$, otherwise. 
Turning this decomposition into generating functions then yields~\eqref{eq:GS2}, as desired. 
\end{proof}

\begin{theorem}[Double Eulerian distribution]\label{dou:eul}Let $S(s,t;z):=S(s,t,1,1;z)$. Then, 
$$
S(s,t;z)=t(z(s-1)+1)S^3(s,t;z)+tz(2s-1)S^2(s,t;z)+z(ts+1)S(s,t;z)+z.
$$
In particular, 
\begin{equation}\label{alg:sepe}
S(1,t;z)=tS^3(1,t;z)+tzS^2(1,t;z)+z(t+1)S(1,t;z)+z
\end{equation}
and
\begin{equation}\label{alg:dist}
S(s,1;z)=(sz-z+1)S^2(s,1;z)+szS(s,1;z)+z,
\end{equation}
where $S(1,t;z):=S(1,t,1,1;z)$ and $S(s,1;z):=S(s,1,1,1;z)$.
\end{theorem}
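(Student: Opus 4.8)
The plan is to eliminate the auxiliary series $B$ and $R$ from the system \eqref{eq:GS1}--\eqref{eq:GS3} after specializing $u=v=1$, and then to read off the two one-variable specializations from the resulting cubic. First I would set $u=1$ and $v=1$ throughout \eqref{eq:GS1}--\eqref{eq:GS3}. The crucial point is that at $v=1$ each series agrees with its own restriction $(\cdot)|_{v=1}$, so that writing $\hat S:=S(s,t,1,1;z)=S(s,t;z)$, $\hat B:=B(s,t,1,1;z)$ and $\hat R:=R(s,t,1,1;z)$, the system collapses to the three honest algebraic equations
\begin{align}
\hat S&=z(1+\hat S)(1+s\hat B),\label{p:A}\\
\hat B&=z(t+t\hat R+\hat B)(1+s\hat B),\label{p:B}\\
\hat R&=z(1+\hat R+\hat B)(1+s\hat B)\label{p:C}
\end{align}
in the three unknowns $\hat S,\hat B,\hat R$. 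This reduces the theorem to a pure elimination problem, and the only subtlety at this stage is justifying that $v=1$ may be substituted into the functional identities (so that $S$ and $S|_{v=1}$ may legitimately be identified).

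Next I would solve \eqref{p:A}--\eqref{p:C} by isolating the common factor $w:=z(1+s\hat B)$. Equation \eqref{p:A} reads $\hat S=w(1+\hat S)$, whence $\hat S=w/(1-w)$. Feeding the same factor $w$ into \eqref{p:C} and \eqref{p:B} and solving the two resulting linear relations gives the clean identities
\[
\hat R=\hat S(1+\hat B)\qquad\text{and}\qquad \hat B=t\hat S(1+\hat R).
\]
Eliminating $\hat R$ between these yields $\hat B$ as an explicit rational function of $\hat S$, namely $\hat B=t\hat S(1+\hat S)/(1-t\hat S^2)$.

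Finally I would substitute this expression for $\hat B$ back into \eqref{p:A} and clear the denominator $1-t\hat S^2$; collecting powers of $\hat S$ produces exactly the claimed cubic for $S(s,t;z)$. The specialization $s=1$ is then immediate upon plugging $s=1$ into the coefficients $z(s-1)+1$, $2s-1$ and $ts+1$, which gives \eqref{alg:sepe}. For $t=1$ the cubic does not visibly reduce to a quadratic, so here the one non-mechanical observation is required: at $t=1$ the cubic factors as
\[
\bigl((sz-z+1)\hat S^2+(sz-1)\hat S+z\bigr)(\hat S+1)=0,
\]
and since $\hat S=S(s,1;z)$ is a power series with zero constant term, the factor $\hat S+1$ cannot vanish; discarding it leaves precisely \eqref{alg:dist}.

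The elimination in the middle step is routine linear algebra once the common factor $w$ has been isolated. The two genuinely delicate points are the legitimacy of setting $v=1$ at the outset and, above all, recognizing the factorization of the cubic at $t=1$ together with the argument that the linear factor is spurious. I expect this last point---spotting the factorization and discarding $\hat S+1$---to be the main obstacle, although it is straightforward to confirm once guessed.
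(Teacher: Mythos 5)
Your proposal is correct and takes essentially the same route as the paper's proof: specialize $u=v=1$, derive the same intermediate identities $R=S(1+B)$ and $B=tS(1+S)/(1-tS^2)$, substitute back to obtain the cubic, and factor out $S+1$ at $t=1$. The only (harmless) additions are your explicit justification that $v=1$ may be substituted and that the factor $S+1$ cannot vanish, both of which the paper leaves implicit.
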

\begin{remark}
Comparing~\eqref{alg:sepe} with the generating function formula for the descent polynomials of
separable permutations in~\cite[Theorem~5.10]{flz}, we get a new proof of~\eqref{eq:sep}.
Recall that a {\em Schr\"oder $n$-path} is a path in $\N^2$ from $(0,0)$ to $(2n,0)$ using only
the steps $(1,1)$ (up), $(1,-1)$ (down) and $(2,0)$ (flat). An {\em ascent} in a Schr\"oder path
is a maximal string of consecutive up steps. Denote by ${S\!P}_{n}$ the set of all Schr\"oder $n$-paths
and by $\asc(p)$ the number of ascents of $p$. Formula~\eqref{alg:dist} shows (see also the generating
function formula in~\href{https://oeis.org/A090981}{\tt{oeis:A090981}}) that 
$$
\sum_{e\in\I_n(021)}s^{\dist(e)}=\sum_{p\in{S\!P}_{n-1}}s^{\asc(p)},
$$
which confirms a conjecture by Corteel et al.~\cite[Conjecture~2]{cor}.
\end{remark}
\begin{proof}[Proof of Theorem~\ref{dou:eul}] For simplicity, we write $S$, $B$ and $R$ for
$S(s,t,1,1;z)$, $B(s,t,1,1;z)$ and $R(s,t,1,1;z)$, respectively. By~\eqref{eq:GS1} and~\eqref{eq:GS3}, we have 
\begin{align}\label{formula1}
1+sB=\frac{S}{z(1+S)}\quad\text{and}\quad R=\frac{z(1+B)(1+sB)}{1-z(1+sB)}.
\end{align}
Substituting the former into the latter we get $R=(1+B)S$. Plugging this into~\eqref{eq:GS2} yields
\begin{align*}
B&=z(t+tS(1+B)+B)(1+sB)\\
&=\frac{(t+tS+tBS+B)S}{1+S},
\end{align*}
where we apply the first formula in~\eqref{formula1} to get the second equality. Solving the above equality
we obtain $B=\frac{tS+tS^2}{1-tS^2}$. Combining this with the first formula in~\eqref{formula1}, we have 
$$
\frac{tS+tS^2}{1-tS^2}=\frac{S-z(1+S)}{sz(1+S)},
$$
which is simplified to the desired algebraic equation for $S$ after cross-multipying.

The relation \eqref{alg:dist} follows from the factorization of the main equation with $t=1$:
$$
S(s,1;z)=(z(s-1)+1)S^3(s,1;z)+z(2s-1)S^2(s,1;z)+z(s+1)S(s,1;z)+z
$$
factors into
$$
\left(S(s,1;z)+1\right)\left((sz-z+1)S^2(s,1;z)+(sz-1)S(s,1;z)+z\right)=0.
$$
\end{proof}
\subsection{The $\gamma$-coefficients of ascent polynomial on $\I_n(021)$}
\label{sec:gamma}

In~\cite{cor}, Corteel et al. posed a question for finding a direct combinatorial proof of the palindromicity
of $S_n(t)$ interpreted as $\sum_{e\in\I_n(021)}t^{\asc(e)}$. Although we are unable to provide
such a direct explanation, we could give a combinatorial proof of the $\gamma$-positivity of $S_n(t)$ as
one application of our algorithm $\Psi$ and the classical Foata--Strehl group action~\cite{fsh}. 

An index $i\in[n-2]$ is called a {\em double ascent} of an inversion sequence $e\in\I_n$ if $\{i,i+1\}\subseteq\ASC(e)$. Let 
$$\widetilde{\I_{n,k}}(021):=\{e\in\I_n(021):\asc(e)=k, \text{ $e$ has no double ascents and $e_{n-1}\geq e_n$}\}.$$
\begin{theorem}\label{thm:invseq}
We have
\begin{equation}\label{eq:invseq}
\sum_{e\in\I_n(021)}t^{\asc(e)}=\sum_{k=0}^{\lfloor(n-1)/2\rfloor}|\widetilde{\I_{n,k}}(021)|t^k(1+t)^{n-1-2k}.
\end{equation}
\end{theorem}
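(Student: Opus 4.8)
The plan is to prove the $\gamma$-positivity of $\sum_{e\in\I_n(021)}t^{\asc(e)}$ by transporting the classical Foata--Strehl (modified Foata--Strehl, or valley-hopping) group action from permutations onto $021$-avoiding inversion sequences through the bijection $\Psi$ of Theorem~\ref{thm:sex}. Since $\Psi$ carries $\ASC$ on $\I_n(021)$ to $\DES$ on $\S_n(2413,4213)$ by part~(1) of Lemma~\ref{lem:lrm}, it suffices to establish the $\gamma$-expansion for the descent polynomial of $\S_n(2413,4213)$ and then pull the whole identity back along $\Psi$. The standard route to $\gamma$-positivity of a descent polynomial is to exhibit an involutive $\Z_2^{n}$-action (or the generated group action) whose orbits each contribute $t^k(1+t)^{n-1-2k}$, where the orbit representatives are precisely the objects with no double descents and no final descent; on inversion sequences these correspond to the set $\widetilde{\I_{n,k}}(021)$ with $k=\asc$, no double ascents, and the ending condition $e_{n-1}\geq e_n$.

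First I would set up the valley-hopping action directly on inversion sequences. The idea is to define, for each index $i$, a local move that toggles whether $i$ is an ascent, leaving the rest of the ascent structure controlled; summing $t^{\asc}$ over an orbit then produces a factor $(1+t)$ for each toggleable position and a fixed $t$ for each forced ascent, giving exactly $t^k(1+t)^{n-1-2k}$ per orbit once we show each orbit has a unique representative lying in $\widetilde{\I_{n,k}}(021)$. The cleanest implementation is to realize the action on the two-colored Dyck path side via the bijection $d:\I_n(021)\to\mathcal{A}_n$ of Lemma~\ref{lem:outl}, or equivalently to conjugate the honest Foata--Strehl action on $\S_n(2413,4213)$ back through $\Psi$. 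The key structural fact I would verify is that the Foata--Strehl action preserves the class $\S_n(2413,4213)$: the valley-hopping move relocates a letter across a descent/ascent, and I must check this never creates a $2413$ or a $4213$ pattern, so that the action restricts to the subclass. Once stability of the pattern class under the action is confirmed, the orbit-sum computation is formal and the representatives are the permutations in $\S_n(2413,4213)$ with no double descents and no descent at the end, which $\Psi^{-1}$ sends to $\widetilde{\I_{n,k}}(021)$.

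The hard part will be precisely this stability/compatibility step: showing that the Foata--Strehl valley-hopping action on permutations restricts to $\S_n(2413,4213)$, and dually that the induced move on $021$-avoiding inversion sequences (or their outlines in $\mathcal{A}_n$) stays inside the class and acts on $\ASC$ in the required double-ascent-free manner. One must check that hopping a letter to the other side of its nearest ``valley'' neither introduces a forbidden length-four pattern nor destroys the $021$-avoidance (equivalently, the weakly-increasing-positive-entries condition), and that the set of fixed/orbit-representative objects is exactly the double-ascent-free, $e_{n-1}\geq e_n$ family. A natural way to discharge this is to describe the action intrinsically on the outline $d(e)$, where a double ascent corresponds to a specific configuration of consecutive black east steps of strictly increasing height and the final condition $e_{n-1}\geq e_n$ corresponds to the last east step not initiating a new higher black segment; the valley-hopping move then has a transparent geometric meaning as toggling whether a given non-extreme east step lies at a local ascent.

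Assuming the compatibility step is settled, I would conclude as follows: for each orbit $\Orb$ of the action on $\S_n(2413,4213)$, letting $\hat\pi$ be its unique representative with no double descent and no final descent, one has
\[
\sum_{\pi\in\Orb} t^{\des(\pi)} = t^{\des(\hat\pi)}(1+t)^{n-1-2\des(\hat\pi)} .
\]
Summing over all orbits, grouping representatives by $k=\des(\hat\pi)$, and transporting through $\Psi$ (so that $\des$ becomes $\asc$, the double-descent-free condition becomes the double-ascent-free condition, and the no-final-descent condition becomes $e_{n-1}\geq e_n$) yields exactly
\[
\sum_{e\in\I_n(021)}t^{\asc(e)}=\sum_{k=0}^{\lfloor(n-1)/2\rfloor}|\widetilde{\I_{n,k}}(021)|\,t^k(1+t)^{n-1-2k},
\]
which is the claimed identity \eqref{eq:invseq}. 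I expect the verification that the group action respects the avoidance class to be where essentially all the real work lies; the enumerative bookkeeping and the transport along $\Psi$ are routine once that is in place.
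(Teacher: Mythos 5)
Your proposal follows essentially the same route as the paper's proof: Lemma~\ref{act:inv} gives the orbit-by-orbit $\gamma$-expansion for any MFS-invariant subset of $\S_n$ (with the convention $\pi_0=\pi_{n+1}=-\infty$, under which ``no double descents'' already absorbs the no-final-descent condition that $\Psi^{-1}$ turns into $e_{n-1}\geq e_n$), Lemma~\ref{patt:inv} supplies the invariance of $\S_n(2413,4213)$ that you correctly single out as the crux, and the identity is then transported through $\Psi$ via part (1) of Lemma~\ref{lem:lrm}. The one step you defer is closed in the paper by a short replacement argument: in any occurrence of $2413$ (or $4213$), swap the ``$4$'' for the largest peak lying between the ``$2$'' and the ``$1$'' and the ``$1$'' for the smallest valley between that peak and the ``$3$''; peaks and valleys are fixed by every $\varphi'_x$, so the new occurrence persists as a $2413$ or $4213$ in every element of the orbit, proving that pattern containment (hence avoidance) is preserved.
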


The rest of this section is devoted to proving this neat $\gamma$-positivity
expansion. We begin with an action on permutations that
is a modification of the original Foata--Strehl action.

For any $x\in[n]$ and $\pi\in\S_n$,
the {\em$x$-factorization} of $\pi$ is the partition of $\pi$ into the form $\pi=w_1w_2 xw_3w_4$,
where $w_i$'s are intervals of $\pi$ and $w_2$ (resp.~$w_3$) is the maximal contiguous interval
(possibly empty) immediately to the left (resp.~right) of $x$ whose letters are all greater than $x$.
Following Foata and Strehl~\cite{fsh} the action $\varphi_x$ is defined by 
$$
\varphi_x(\pi)=w_1w_3xw_2w_4.
$$
For example, with $\pi=34862571\in\S_8$ and $x=4$, we find that $w_1=3$, $w_2=\emptyset$, $w_3=86$, $w_4=2571$, and we get $\varphi_x(\pi)=38642571$. The map $\varphi_x$ is an involution acting on $\S_n$, and for all $x,y\in[n]$, $\varphi_x$ and $\varphi_y$ commute.

\begin{figure}
\setlength{\unitlength}{1mm}
\begin{picture}(108,38)\setlength{\unitlength}{1mm}
\thinlines
\put(12,15){\dashline{1}(1,0)(40,0)}
\put(52,15){\vector(1,0){0.1}}

\put(16,19){\dashline{1}(1,0)(32,0)}
\put(48,19){\vector(1,0){0.1}}

\put(40,27){\dashline{1}(-1,0)(-16,0)}
\put(24,27){\vector(-1,0){0.1}}

\put(68,23){\dashline{1}(1,0)(16,0)}
\put(84,23){\vector(1,0){0.1}}

\put(100,7){\dashline{1}(-1,0)(-96,0)}
\put(4,7){\vector(-1,0){0.1}}

\put(0,3){\line(1,1){32}}\put(-2,0){$-\infty$}
\put(12,15){\circle*{1.3}}\put(9,15){$3$}
\put(16,19){\circle*{1.3}}\put(13,19){$4$}
\put(32,35){\circle*{1.3}}\put(29,35){$8$}

\put(32,35){\line(1,-1){24}}
\put(56,11){\circle*{1.3}}\put(57,9){$2$}
\put(40,27){\circle*{1.3}}\put(41,27){$6$}

\put(56,11){\line(1,1){20}}
\put(76,31){\circle*{1.3}}\put(77,31){$7$}
\put(68,23){\circle*{1.3}}\put(65,23){$5$}

\put(76,31){\line(1,-1){28}}\put(103,0){$-\infty$}
\put(100,7){\circle*{1.3}}\put(101,7){$1$}
\end{picture}
\caption{MFS-actions on $34862571$}
\label{valhop}
\end{figure}
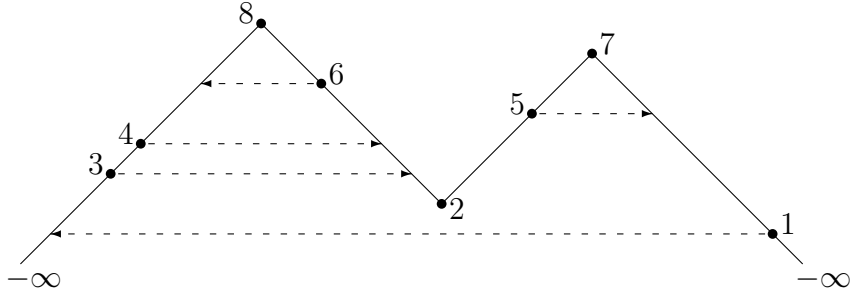

For $\pi\in\S_n$, a value $\pi_i$ ($1\leq i\leq n$) is called a
{\em double ascent} (resp.~{\em double descent, peak, valley}) of $\pi$
if $\pi_{i-1}<\pi_i<\pi_{i+1}$ (resp.~$\pi_{i-1}>\pi_i>\pi_{i+1}$,
$\pi_{i-1}<\pi_i>\pi_{i+1}$, $\pi_{i-1}>\pi_i<\pi_{i+1}$),
where we use the convention $\pi_0=\pi_{n+1}=-\infty$.
For our purpose, we slightly modify the action $\varphi_x$ to be 
$$
\varphi'_x(\pi):=
\begin{cases}
\varphi_x(\pi),&\text{if $x$ is a double ascent or a double descent of $\pi$};\\
\pi,& \text{if $x$ is a peak or a valley of $\pi$.}
\end{cases}
$$
Again all $\varphi'_x$'s are involutions and commute. Therefore, for any $S\subseteq[n]$ we can define the function $\varphi'_S:\S_n\rightarrow\S_n$ by $\varphi'_S=\prod_{x\in S}\varphi'_x$, where multiplication is the composition of functions. Hence the group $\Z_2^n$ acts on $\S_n$ via the function $\varphi'_S$. This action is called the {\em Modified Foata--Strehl action} (MFS-action for short) and has a nice visualization as depicted in Figure~\ref{valhop}. It should be noted that this action is different  with the modification of Foata-Strehl action defined in~\cite{lz,pe}, where they take the convention $\pi_0=\pi_{n+1}=+\infty$.

For any $\mathcal{S}\subseteq\S_n$, let 
$$
\widetilde{\mathcal{S}_{n,k}}:=\{\pi\in \mathcal{S}:\des(\pi)=k,\text{ $\pi$ has no double descents}\}.
$$
\begin{lemma}\label{act:inv}
 Suppose $\mathcal{S}\subseteq\S_n$ is invariant under the MFS-action. Then,
\begin{equation}\label{gam:int}
\sum_{\pi\in\mathcal{S}}t^{\des(\pi)}=\sum_{k=0}^{\lfloor(n-1)/2\rfloor}|\widetilde{\mathcal{S}_{n,k}}|t^k(1+t)^{n-1-2k}.
\end{equation}
\end{lemma}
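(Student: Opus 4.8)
The plan is to prove the identity by decomposing $\mathcal{S}$ into orbits of the MFS-action and evaluating the descent generating function orbit by orbit. Since $\mathcal{S}$ is assumed invariant under the action of $\Z_2^n$ via the maps $\varphi'_S$, it is a disjoint union of orbits, so it suffices to show that each orbit $\mathcal{O}$ contributes
$$
\sum_{\pi\in\mathcal{O}}t^{\des(\pi)}=t^{k}(1+t)^{n-1-2k},
$$
where $k=\des(\hat{\pi})$ for the distinguished representative $\hat{\pi}\in\mathcal{O}$ having no double descents. Granting this, I would sum over all orbits and group them by $k$: because each orbit contains exactly one double-descent-free element, and $\widetilde{\mathcal{S}_{n,k}}$ is precisely the set of such representatives with $k$ descents, the right-hand side of~\eqref{gam:int} falls out at once.

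The structural input I would isolate first is the effect of a single generator $\varphi'_x$ on the classification of letters into peaks, valleys, double ascents and double descents (with the convention $\pi_0=\pi_{n+1}=-\infty$). Reading off the $x$-factorization $\pi=w_1w_2\,x\,w_3w_4$, interchanging the blocks $w_2$ and $w_3$ leaves the relative order of all letters different from $x$ unchanged; hence it preserves the type of every letter other than $x$, while it swaps the double-ascent and double-descent status of $x$ itself (and does nothing when $x$ is a peak or a valley). Consequently the set of peaks and the set of valleys are constant along an orbit. Writing $P$ for the number of peaks and $m$ for the number of letters that are either a double ascent or a double descent, the $m$ togglable letters can be flipped independently, so the orbit is in bijection with $\{0,1\}^{m}$ and has size $2^{m}$; in particular there is a unique element $\hat{\pi}$ in which every togglable letter is a double ascent, i.e.\ the unique element of $\mathcal{O}$ with no double descents (this is the valley-hopping picture of Figure~\ref{valhop}).

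The descent bookkeeping is then routine. A direct case check under the $-\infty$ convention shows that each peak and each double descent starts a descending step, and that exactly one of these steps (the one at position $n$, which always falls to the phantom $-\infty$) is not an honest descent, giving $\des(\pi)=P+\mathrm{dd}(\pi)-1$, where $\mathrm{dd}(\pi)$ is the number of double descents. Since $P$ is constant on $\mathcal{O}$ and $\mathrm{dd}(\hat{\pi})=0$, we get $k=\des(\hat{\pi})=P-1$; using that the number of valleys is $P-1$ (so that $n=P+(P-1)+m$) gives $m=n-2P+1=n-1-2k$. Letting the number of double descents run from $0$ to $m$ across the orbit yields
$$
\sum_{\pi\in\mathcal{O}}t^{\des(\pi)}=t^{k}\sum_{j=0}^{m}\binom{m}{j}t^{j}=t^{k}(1+t)^{n-1-2k},
$$
which is exactly what the first paragraph required.

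The main obstacle is the structural claim in the second step: one must verify, from the $x$-factorization, that swapping $w_2$ and $w_3$ really does fix the peak/valley/double-ascent/double-descent type of every letter besides $x$ and toggles only the type of $x$. The delicate points are the letters at the inner ends of $w_2$ and $w_3$ and the letters flanking $x$, where the immediate neighbours change; checking that every letter involved in these boundaries exceeds $x$, together with a careful accounting of the $-\infty$ boundary convention, is precisely what makes the peak and valley sets genuinely invariant. Once this valley-hopping fact is pinned down, everything else is formal, and the invariance hypothesis on $\mathcal{S}$ is used only to guarantee that it is a union of whole orbits.
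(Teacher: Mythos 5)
Your proposal is correct and follows essentially the same route as the paper: decompose $\mathcal{S}$ into orbits of the MFS-action, observe that each orbit has a unique double-descent-free representative $\bar{\pi}$, and show the orbit contributes $t^{\des(\bar{\pi})}(1+t)^{n-1-2\des(\bar{\pi})}$. You actually supply more detail than the paper does (the invariance of the peak/valley sets under $\varphi'_x$, the count $\des(\pi)=P+\mathrm{dd}(\pi)-1$, and the orbit size $2^{m}$), all of which checks out under the $\pi_0=\pi_{n+1}=-\infty$ convention.
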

\begin{proof}
For each $\pi\in\mathcal{S}$, let $\Orb(\pi)=\{g(\pi): g\in\Z^n_2\}$ be the orbit of $\pi$ under the MFS-action. Note that if $x$ is a double descent of $\pi$, then $x$ is a double ascent of $\varphi'(\pi)$. Thus, there exists a unique permutation in $\Orb(\pi)$ which has no double descent. Moreover, if we let $\bar{\pi}$ be such a unique element in $\Orb(\pi)$, then we have 
$$
\sum_{\sigma\in\Orb(\pi)}t^{\des(\sigma)}=t^{\des(\bar{\pi})}(1+t)^{\da(\bar{\pi})}=t^{\des(\bar{\pi})}(1+t)^{n-1-2\des(\bar{\pi})},
$$
where $\da(\bar{\pi})$ denotes the number of {\bf d}ouble {\bf a}scent of $\bar{\pi}$ and the second equality follows from the simple fact that $\da(\bar{\pi})=n-1-2\,\des(\bar{\pi})$. Summing over all orbits of $\mathcal{S}$ under the MFS-action gives~\eqref{gam:int}. Consider for instance $\pi=34862571$ in Figure~\ref{valhop},  then
$\bar{\pi}=13468257$, $\da(\bar{\pi})=5$, $n=8$ and $\des(\bar{\pi})=1$.
\end{proof}

\begin{lemma}\label{patt:inv}
The set $\S_n(2413,4213)$ is invariant under the MFS-action.
\end{lemma}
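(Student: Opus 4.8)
The plan is to reduce the statement to a single generator of the MFS-action and then verify that this generator preserves avoidance by exploiting a convenient reformulation of the two forbidden patterns. Since the group $\Z_2^n$ is generated by the commuting involutions $\varphi'_x$, the set $\S_n(2413,4213)$ is invariant under the whole action if and only if it is invariant under each $\varphi'_x$ individually; and because $\varphi'_x$ is an involution, it suffices to prove the single implication that, for every $\pi\in\S_n$, if $\varphi'_x(\pi)$ contains $2413$ or $4213$ then so does $\pi$ (applying this to $\varphi'_x(\pi)$ yields the converse, hence the desired equivalence). When $x$ is a peak or a valley we have $\varphi'_x(\pi)=\pi$ and there is nothing to prove, so only the double ascent and double descent cases remain.

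The first key step is to unify the two patterns through a value--position criterion: a permutation $\pi$ contains $2413$ or $4213$ if and only if there exist four values $\alpha<\beta<\gamma<\delta$ such that, in $\pi$, both $\beta$ and $\delta$ occur before $\alpha$ while $\gamma$ occurs after $\alpha$. Indeed, the forbidden configuration is exactly a quadruple of positions $a<b<c<d$ with $\pi_c<\min(\pi_a,\pi_b)<\pi_d<\max(\pi_a,\pi_b)$, and the two possible orderings of $\pi_a,\pi_b$ recover $2413$ and $4213$ respectively. This criterion is insensitive to the relative order of the two large entries, which is precisely what the action will disturb.

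The second observation, which makes the analysis tractable, is that for the \emph{modified} action a double ascent forces $w_2=\emptyset$ and a double descent forces $w_3=\emptyset$; hence $\varphi'_x$ simply makes the single entry $x$ hop over exactly one nonempty block of entries all larger than $x$ (to the right, over $w_3$, in the double ascent case; to the left, over $w_2$, in the double descent case). Consequently the only pairs of entries whose left-to-right order changes are the pairs $(x,q)$ with $q$ in that block. I would then transport a putative occurrence of the criterion in $\varphi'_x(\pi)$ back to $\pi$ by a short case analysis on the role played by $x$: if $x$ is none of the four chosen values the occurrence survives verbatim; and if $x$ equals $\beta$, $\gamma$, or $\delta$, the two endpoints of its hop keep it on the correct side of $\alpha$, so the occurrence again survives, possibly interchanging which of $a,b$ carries $x$.

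The main obstacle is the remaining case $x=\alpha$, where $x$ is the smallest of the four values and moving it across the block can pull it to the wrong side of the other chosen entries. Here I would use the boundary entry $y$ adjacent to the block on the far side of the hop---the first entry of $w_4$ in the double ascent case, or the last entry of $w_1$ in the double descent case---which is necessarily smaller than $x$ because the block is a maximal run of entries exceeding $x$. This $y$ is smaller than $\beta,\gamma,\delta$ and lies on the correct side of all three, so the quadruple $(y,\beta,\gamma,\delta)$ realizes the criterion in $\pi$; the existence of $y$ follows since $w_4$ (resp.\ $w_1$) cannot be empty without $\alpha=x$ having nothing before it. I expect the bulk of the write-up to be this enumeration of the positions of $\beta,\gamma,\delta$ relative to the block together with the verification that the substitution always produces a genuine occurrence, while the conceptual content is entirely contained in the value--position criterion and in the substitution of the boundary entry $y$.
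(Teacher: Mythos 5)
Your proof is correct, but it follows a genuinely different route from the paper's. The paper argues in the forward direction ($\pi$ contains a forbidden pattern $\Rightarrow$ $\varphi'_x(\pi)$ does) by \emph{normalizing} the occurrence: from a $2413$-occurrence $\pi_i\pi_j\pi_k\pi_l$ it extracts the largest peak $\pi_{j'}$ between positions $i$ and $k$ and the smallest valley $\pi_{k'}$ between $j'$ and $l$, so that the middle two letters of the new occurrence $\pi_i\pi_{j'}\pi_{k'}\pi_l$ are fixed points of the modified action; only the outer letters can move, and a short check shows they can only turn $2413$ into $4213$. You instead work in the backward direction and replace the normalization by two different devices: the value--position criterion (contains $2413$ or $4213$ iff there are values $\alpha<\beta<\gamma<\delta$ with $\beta,\delta$ left of $\alpha$ and $\gamma$ right of $\alpha$), which correctly symmetrizes the two patterns and reduces everything to positions relative to the smallest value $\alpha$; and, in the only hard case $x=\alpha$, the substitution of the boundary entry $y<x$ on the far side of the hop, which I have checked does land on the correct side of $\beta,\gamma,\delta$ in both the double-ascent and double-descent cases (and exists because otherwise $\gamma$, resp.\ $\beta$ and $\delta$, could not be placed). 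The paper's normalization is arguably slicker and handles all roles of $x$ at once; your criterion has the advantage of making transparent exactly why the pair $\{2413,4213\}$ (rather than either pattern alone) is the right class to be preserved, since the criterion is blind to the relative order of $\beta$ and $\delta$. Two cosmetic points: your parenthetical justifying the existence of $y$ says ``nothing before it'' where the $w_4$ case needs ``nothing after it''; and you should state explicitly that the $x$-factorization in your case analysis is taken in $\pi$ (so the occurrence lives in $\varphi'_x(\pi)=w_1w_3xw_4$, resp.\ $w_1xw_2w_4$), since the bookkeeping of which of $\beta,\delta,\gamma$ can land in the hopped-over block depends on that choice.
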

\begin{proof}
First we show that if $\pi\in\S_n$ contains the pattern $2413$, then $\varphi'_x(\pi)$ 
must contain the pattern $2413$ or $4213$ for any $x\in[n]$. Since $\pi$ contains the 
pattern $2413$, there exist $i<j<k<l$ such that $\pi_i\pi_j\pi_k\pi_l$ is order 
isomorphic to $2413$. Let us consider the largest peak $\pi_{j'}$ with $i<j'<k$. Note 
that such a peak $\pi_{j'}$ always exists because $\pi_i<\pi_j>\pi_l$ and obviously
$\pi_{j'}\geq\pi_j$. Then we consider the smallest valley $\pi_{k'}$ with $j'<k'<l$, 
which also exists (as $\pi_{j'}>\pi_k<\pi_l$) and is less than or equal to
$\pi_k$. Now the (new) subsequence $\pi_i\pi_{j'}\pi_{k'}\pi_l$ is again a $2413$
pattern in $\pi$, 
but $\pi_{j'}$ and $\pi_{k'}$ are respectively a peak and a valley of $\pi$. It is clear 
from the definition of $\varphi'$ that for any $x\in[n]$, the subsequence with letters
$\{\pi_i,\pi_{j'},\pi_{k'},\pi_l\}$ is either a pattern $2413$ or $4213$ in
$\varphi'_x(\pi)$.

Similarly, we can show that if $\pi\in\S_n$ contains the pattern $4213$, then
$\varphi'_S(\pi)$ must contain the pattern $2413$ or $4213$ for any $S\subseteq[n]$.
This completes the proof of the lemma.
\end{proof}

\begin{proof}[Proof of Theorem~\ref{thm:invseq}] It follows from
Lemmas~\ref{patt:inv} and~\ref{act:inv} that 
\begin{equation}\label{expan:per}
\sum_{\pi\in\S_n(2413,4213)}t^{\des(\pi)}=\sum_{k=0}^{\lfloor(n-1)/2\rfloor}|\widetilde{\S_{n,k}}(2413,4213)|t^k(1+t)^{n-1-2k},
\end{equation}
where
$$
\widetilde{\S_{n,k}}(2413,4213)=\{\pi\in \S_n(2413,4213):\des(\pi)=k,\text{ $\pi$ has no double descents}\}.
$$
Applying $\Psi^{-1}$ to both sides of expansion~\eqref{expan:per}
yields~\eqref{eq:invseq} because of (1) in Lemma~\ref{lem:lrm}.
\end{proof}

\section{On the $\des$-Wilf equivalences for Schr\"oder classes}
\label{sec:wilf}

For two sets of patterns $\Pi$ and $\Pi'$, we write $\Pi\sim_{\des}\Pi'$ if 
$$
\sum_{\sigma\in\S_n(\Pi)}t^{\des(\sigma)}=\sum_{\sigma\in\S_n(\Pi')}t^{\des(\sigma)}.
$$
Similarly, we define $\Pi\sim_{\DES}\Pi'$. We say that $\Pi$ is
{\em$\des$-Wilf (resp.~$\DES$-Wilf) equivalent} to $\Pi'$ if $\Pi\sim_{\des}\Pi'$
(resp.~$\Pi\sim_{\DES}\Pi'$). Letting $s=1$ in~\eqref{restrict}, we obtain
the restricted version of~\eqref{des:asc}: 
\begin{equation}\label{restr:des}
\sum_{\pi\in\S_n(2413,4213)}t^{\DES(\pi)}=\sum_{e\in\I_n(021)}t^{\ASC(e)}.
\end{equation} 
Motivated by the above equidistribution and~\eqref{eq:sep}, we prove in this section
the following refined Wilf-equivalences.

\begin{theorem}\label{wilf4}
We have the following refined Wilf-equivalences: 
$$
(2413,3142)\sim_{\des}(2413,4213)\sim_{\Des}(2314,3214)\sim_{\Des}(3412,4312).
$$
\end{theorem}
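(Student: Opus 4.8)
The plan is to prove the chain of three equivalences by handling each link with a tailored bijective or symmetry argument, since the three statements assert different strengths (one $\des$-equivalence and two $\DES$-equivalences, the latter being set-valued refinements). First I would dispose of the two $\DES$-equivalences on the right, because these should follow from elementary symmetry operations on permutations that preserve the descent \emph{set}. The natural candidates are the reverse-complement type maps: for $\pi\in\S_n$, operations such as $\pi\mapsto\pi^{-1}$, $\pi\mapsto\rev(\pi)$ (reversal), $\pi\mapsto\comp(\pi)$ (complementation $\pi_i\mapsto n+1-\pi_i$), and their compositions act on pattern sets in a controlled way. Concretely, reversal sends a pattern $P$ to its reversal $P^r$ and maps $\DES$ to its complement $[n-1]\setminus\DES$, while complementation maps $P$ to $\bar P$ and exactly reverses each descent into an ascent, hence also complements $\DES$; the composition $\rev\circ\comp$ (a $180^\circ$ rotation of the plot) preserves $\DES$ \emph{setwise} and sends each pattern $P$ to its $180^\circ$ rotation. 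I would check that $(2314,3214)$ and $(3412,4312)$ are obtained from $(2413,4213)$ by such descent-set-preserving symmetries: for instance $180^\circ$-rotation sends $2413\mapsto 3142$ and $4213\mapsto 3124$, so a \emph{single} rotation does not directly land on the target, and the correct verification is to find, for each of the two pairs, one symmetry (or a short composition) in the dihedral group generated by $\rev$, $\comp$, and inverse that carries $\{2413,4213\}$ bijectively onto the target pair while inducing a $\DES$-preserving bijection on the avoidance classes. This is a finite, mechanical check over the eight symmetries of the square.

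Next I would prove the leftmost equivalence $(2413,3142)\sim_{\des}(2413,4213)$, which is only a $\des$ (numerical) equivalence and is exactly the content tying separable permutations to the class studied throughout the paper. Here I would invoke equations already established in the excerpt: by~\eqref{eq:sep} the descent polynomial of separable permutations $\S_n(2413,3142)$ equals $\sum_{e\in\I_n(021)}t^{\asc(e)}$, and by~\eqref{restr:des} (the $s=1$ specialization of Theorem~\ref{thm:sex}) the set-valued descent polynomial of $\S_n(2413,4213)$ equals $\sum_{e\in\I_n(021)}t^{\ASC(e)}$. Passing from the set-valued identity to the numerical one by setting $t_i=t$ for all $i$, both classes have descent polynomial $\sum_{e\in\I_n(021)}t^{\asc(e)}=S_n(t)$, which immediately gives $(2413,3142)\sim_{\des}(2413,4213)$. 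Thus this link requires no new bijection at all, only the two displayed equidistributions.

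I would then assemble the chain: the middle $\des$-equivalence comes for free from the two Dyck-path interpretations, and each of the two right-hand $\DES$-equivalences comes from an explicit descent-set-preserving symmetry of the plane acting on the permutation classes. The main obstacle, and the step demanding genuine care rather than routine bookkeeping, is verifying the $\DES$-preserving symmetries for the last two links: one must confirm not merely that the pattern \emph{sets} match under the chosen dihedral symmetry, but that the induced map on $\S_n(2413,4213)$ genuinely preserves the descent \emph{set} (not just its cardinality), and for that the symmetry must be of the $180^\circ$-rotation type (which fixes $\DES$ setwise) rather than reversal or complementation alone (which complement $\DES$). Since a single $180^\circ$ rotation sends $2413\mapsto 3142$ rather than into either target pair, I expect the correct symmetry to be $180^\circ$ rotation composed with taking inverses, and the delicate point will be checking that this composite both permutes the pattern pair correctly and acts on descent sets as the identity; this compatibility is what makes the right two equivalences hold at the finer $\DES$-level while the left one holds only at the coarser $\des$-level.
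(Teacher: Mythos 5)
Your treatment of the leftmost link is fine and is essentially what the paper does: both $\S_n(2413,3142)$ and $\S_n(2413,4213)$ have descent polynomial $\sum_{e\in\I_n(021)}t^{\asc(e)}$, by \eqref{eq:sep} together with the $s=1$ specialization of \eqref{restrict}, so $(2413,3142)\sim_{\des}(2413,4213)$ requires nothing new. The gap is in your plan for the two $\DES$-equivalences. First, no nonidentity symmetry of the square preserves the descent \emph{set} of every permutation: reversal sends $\DES(\pi)$ to $\{n-i:i\in\ASC(\pi)\}$, complementation sends it to $[n-1]\setminus\DES(\pi)$, the $180^\circ$ rotation $\rev\circ\comp$ sends it to the reflected set $\{n-i:i\in\DES(\pi)\}$ (e.g.\ $132\mapsto 213$ turns $\DES=\{2\}$ into $\{1\}$), and composing with the inverse map gives no control over $\DES$ at all. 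Second, and more decisively, the pattern pairs are not even in the same dihedral orbit: every symmetry of the square sends $2413$ to either $2413$ or $3142$, and neither of these lies in $\{2314,3214\}$ or in $\{3412,4312\}$; hence no symmetry can carry $\{2413,4213\}$ onto either target pair. This is exactly why these appear as distinct classes in Kremer's classification and why the theorem has genuine content at the $\DES$ level.

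The paper's route for the $\DES$-links is entirely different: it constructs a recursive bijection $\Phi:\S_n(2413,4213)\to\I_n(021)$ sending $\DES$ to $\ASC$, by appending the new largest-position letter via the insertion operator $T_k$ and tracking the set $\AVA(\pi)$ of available inserting values; Lemma~\ref{insert} describes how $\AVA$ evolves under insertion, which is what makes the encoding of the inserted value by a new inversion-sequence entry consistent and keeps the image $021$-avoiding. The same generating-tree/insertion scheme is then asserted to work for $\S_n(2314,3214)$ and $\S_n(3412,4312)$, routing all three classes through $\I_n(021)$ with $\DES$ matched to $\ASC$. To repair your argument you would need to supply such $\DES$-preserving bijections (or an equivalent set-valued equidistribution) for the last two classes; the symmetry group of the square cannot do it.
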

\begin{remark}
All the above Wilf-equivalences are known in the works of West~\cite{we} and
Kremer~\cite{kre}. A simple Sage program shows that, among all permutation classes
avoiding two patterns of length $4$, these are all the nontrivial classes which are
$\des$-Wilf equivalent to the classes of separable permutations. Some nice refined
Wilf-equivalences can be found in Elizald and Pak~\cite{eli}, Dokos et al.~\cite{sag}
and Bloom~\cite{blo}.
\end{remark}

\begin{proof}[Proof of Theorem~\ref{wilf4}] 
We first construct a bijection
$\Phi:\S_n(2413,4213)\rightarrow\I_n(021)$,
which transforms the statistic ``$\DES$'' to ``$\ASC$'' and therefore provides another
simple proof of equidistribution~\eqref{restr:des}.
Our bijection $\Phi$ will be defined recursively, which can be considered as a restricted version of $\Theta$
for equidistribution~\eqref{des:asc}.

For any $k\in[n]$, let us define the inserting operator $T_k:\S_{n-1}\rightarrow\S_n$ by
$$
T_k(\pi)=\pi'_1\pi'_2\ldots\pi'_{n-1}k\quad\text{for each $\pi\in\S_{n-1}$},
$$
where $\pi'_i=\pi_i+\chi(\pi_i\geq k)$ for $1\leq i\leq n-1$. If $\pi\in\S_{n-1}(2413,4213)$, then introduce the set of {\em{\bf\em ava}ilable inserting values} of $\pi$ as
$$
\AVA(\pi):=\{k\in[n]: T_k(\pi)\in\S_{n}(2413,4213)\}=\{k_1>k_2>\cdots\},
$$
where $k_j$ is called the {\em$j$-th available inserting value} of $\pi$. Clearly,
$n$ and $1$ are respectively the first and the last available inserting values of $\pi$.
For example, we have $\AVA(3142)=\{5>3>2>1\}$.

Before we complete the proof, we need the following lemma:
\begin{lemma}\label{insert}Suppose $\pi\in\S_{n-1}(2413,4213)$ with 
$$
\AVA(\pi)=\{n=k_1>k_2>\cdots>k_m=1\}.
$$
Then, for  $1\leq j\leq m$,
$$
\AVA(T_{k_j}(\pi))=\{n+1\geq k_j+1>k_j>k_{j+1}>\cdots>k_m=1\}.
$$
\end{lemma}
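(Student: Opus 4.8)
The plan is to reduce the whole statement to a single combinatorial description of $\AVA$ and then track how it transforms under one insertion. \emph{First} I would record the governing characterization: because $\pi\in\S_{n-1}(2413,4213)$ already avoids both patterns, $T_k(\pi)$ can only create a forbidden $2413$ or $4213$ by using the appended letter as the ``$3$'' in the last position. Unwinding the definition $\pi'_i=\pi_i+\chi(\pi_i\ge k)$ shows that $k\notin\AVA(\pi)$ if and only if there are positions $a<b<l$ with $\pi_l<\min(\pi_a,\pi_b)$ and $\min(\pi_a,\pi_b)<k\le\max(\pi_a,\pi_b)$. Calling a pair $a<b$ \emph{active} when some later $l$ has $\pi_l<\min(\pi_a,\pi_b)$, this says precisely that the forbidden set $[n]\setminus\AVA(\pi)$ is the union of the intervals $(\min(\pi_a,\pi_b),\max(\pi_a,\pi_b)]$ over all active pairs. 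I would isolate this as a short preliminary lemma, since it is the engine of the argument.

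\emph{Second}, write $c:=k_j$ and $\sigma:=T_{k_j}(\pi)\in\S_n(2413,4213)$, whose last letter is $c$. The hypothesis $c\in\AVA(\pi)$ means, in the language above, that no active pair of $\pi$ straddles $c$; hence every active pair is \emph{low} (both values $<c$) or \emph{high} (both values $\ge c$). I then classify the active pairs of $\sigma$. In any forbidden configuration of $T_{k'}(\sigma)$ the top pair lies among the first $n-1$ positions, while the dominated letter is either one of them --- giving an active pair already present in $\pi$ (via the order isomorphism $\pi\mapsto\pi'$) --- or the trailing letter $c$, which activates exactly the pairs whose two values are both $\ge c$. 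Tracking the relabelling, a low active pair contributes the same interval as in $\pi$, lying in $\{1,\dots,c-1\}$, whereas a pair with both values $\ge c$ contributes $\{\min+2,\dots,\max+1\}\subseteq\{c+2,\dots,n\}$ after the $+1$ shift on values $\ge c$.

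\emph{Third}, I assemble the forbidden set of $\sigma$ range by range. On $\{1,\dots,c-1\}$ only low pairs act, so $\sigma$ and $\pi$ have the same forbidden values there, and the survivors are exactly $k_{j+1},\dots,k_m$. The values $c$ and $c+1$ lie in the gap between the low intervals (all $\le c-1$) and the high intervals (all $\ge c+2$), so they are forbidden by no pair and remain available; and $n+1$ is available because no interval exceeds $n$. It remains to check that $\{c+2,\dots,n\}$ is \emph{entirely} forbidden, which I would do by exhibiting, when $c\le n-2$, the single pair carrying the values $c$ and $n-1$ (both present since $\pi$ is a permutation of $[n-1]$): its interval is all of $\{c+2,\dots,n\}$. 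The cases $c=n-1$ and $j=1$ (where $c=n$ and $c+1=n+1$ coincide) are vacuous. Combining the five ranges gives $\AVA(\sigma)=\{n+1,c+1,c,k_{j+1},\dots,k_m\}$, which is the asserted set.

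The step I expect to be the real obstacle is pinning down the \emph{gap} at $c$ and $c+1$: one must verify that no active pair of $\sigma$ produces a value-interval meeting $\{c,c+1\}$. This is exactly where the hypothesis $c=k_j\in\AVA(\pi)$ is indispensable, since it forbids straddling active pairs, and where the unit shift on values $\ge c$ must be shown to push the high intervals to begin at $c+2$ rather than $c+1$. Keeping the half-open endpoints and the distinction between $<$ and $\le$ consistent throughout the active-pair characterization is the delicate bookkeeping; the remaining range-by-range verification is routine.
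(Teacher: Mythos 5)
Your proposal is correct; I checked the interval characterization of $[n]\setminus\AVA(\pi)$, the classification of the active pairs of $T_{k_j}(\pi)$ (those inherited from $\pi$ plus those newly activated by the trailing letter $k_j$, which are exactly the pairs with both values $\ge k_j$), and the range-by-range assembly, and each step goes through --- including the two delicate points you flag, the gap at $\{k_j,k_j+1\}$ and the half-open endpoints. The underlying mechanism is the same as in the paper: the appended letter can only serve as the final ``3'' of a forbidden pattern, so availability of $k$ is governed by whether two earlier letters straddle $k$ above some later, smaller letter. But your organization is genuinely different. The paper splits on $k\le k_j$ versus $k>k_j$: for $k\le k_j$ it observes that the subsequence of $T_k(T_{k_j}(\pi))$ on the letters $[n+1]\setminus\{k_j+1\}$ is order-isomorphic to $T_k(\pi)$, and for $k>k_j$ it argues that the penultimate and ultimate letters ``$k_jk$'' can play ``13'' in a forbidden subsequence iff $k_j+1<k\le n$, then asserts $\{k_j+1,n+1\}\subseteq\AVA(T_{k_j}(\pi))$. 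Your version replaces these ad hoc cases by one reusable preliminary lemma (the complement of $\AVA$ is a union of half-open value-intervals indexed by active pairs) and then tracks how the set of active pairs changes under a single insertion. This buys a uniform treatment of all five value ranges and makes explicit two points the paper leaves to the reader: why availability below $k_j$ is preserved in both directions, and why $k_j+1$ and $n+1$ survive (they fall in the gap between the low intervals, which end by $k_j-1$, and the high intervals, which start at $k_j+2$). The paper's route is shorter on the page; yours is more self-contained, and the single witnessing pair carrying the values $k_j$ and $n-1$ is a clean way to see that all of $\{k_j+2,\dots,n\}$ is forbidden at once.
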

\begin{proof}
If $k$ is less than or equal to $k_j$, then the subsequence of $T_k(T_{k_j}(\pi))$ composed of letters
$[n+1]\setminus\{k_j+1\}$ is order isomorphic to $T_k(\pi)$. Therefore, for all $k\leq k_j$, we have
$k\in\AVA(T_{k_j}(\pi))$ if and only if $k\in\AVA(\pi)$. On the other hand,
if $k$ is greater than $k_j$, then the penultimate and the ultimate letters ``$k_jk$'' can play
the role of ``$13$'' in a length $4$ subsequence of $T_k(T_{k_j}(\pi))$ if and only if $k_j+1<k\leq n$. 
This completes the proof of the lemma, since $\{k_j+1,n+1\}\subseteq\AVA(T_{k_j}(\pi))$.
\end{proof}

We now continue to prove Theorem~\ref{wilf4}.
Using the above lemma, we define $\Phi$ recursively as follows. Suppose
$\Phi$ is defined for $\S_{n-1}(2413,4213)$. Any permutation $\sigma\in\S_n(2413,4213)$
can be expressed uniquely as $\sigma=T_k(\pi)$ for some $\pi\in\S_{n-1}(2413,4213)$
and $k\in\AVA(\pi)$. If $k$ is the $j$-th available inserting value of $\pi$ and
$(e_1,\ldots,e_{n-1})=\Phi(\pi)\in\I_{n-1}(021)$, then define
$\Phi(\sigma)=(e_1,\ldots,e_{n-1},m_j)$, where $m_j$ is the $j$-th smallest integer
in the set
$$
\{0,m,m+1,\ldots,n-1\}\text{ with $m=\max\{e_1,\ldots,e_{n-1}\}$}.
$$
For example, we have $\Phi(5164372)=(0,1,0,2,3,0,5)$. 
Using Lemma~\ref{insert}, it is routine to check by induction on $n$ that
$\Phi:\S_n(2413,4213)\rightarrow\I_n(021)$ as defined above is a bijection, which
transforms the statistic ``$\DES$'' to ``$\ASC$''.

It turns out that a similar construction can be applied to show
$$ 
\sum_{\pi\in\S_n(2314,3214)}t^{\DES(\pi)}=\sum_{e\in\I_n(021)}t^{\ASC(e)}\quad\text{and}\quad\sum_{\pi\in\S_n(3412,4312)}t^{\DES(\pi)}=\sum_{e\in\I_n(021)}t^{\ASC(e)},
$$
which completes the proof of the theorem. The details are left to interested readers. 
\end{proof}

\begin{remark}
Actually, bijection $\Phi:\S_n(2413,4213)\rightarrow\I_n(021)$
transforms the quadruple $(\DES,\LMA,\LMI,\RMA)$ to $(\ASC,\ZERO,\EMA,\RMI)$
as the natural bijection $\Theta:\S_n\rightarrow\I_n$ does. 
\end{remark}

\section{Final remarks}
\label{sec:fin}

In view of Theorem~\ref{thm:sex}, a natural question is to see if there exists an analog extension of Foata's equidistribution~\eqref{dist:asc} regarding our set-valued distributions defined for all elements of $\I_n$ or $\S_n$. The following result answers this question. 

\begin{theorem}\label{thm:gen}
For $n\geq1$, the quadruple distribution $(\DIST,\ASC,\ZERO,\EMA)$ on $\I_n$ is equidistributed with $(\VID,\DES,\LMA,\LMI)$ on $\S_n$. In particular, 
\begin{equation}
\sum_{e\in\I_n}s^{\DIST(e)}t^{\ASC(e)}=\sum_{\pi\in\S_n}s^{\VID(\pi)}t^{\DES(\pi)}.
\end{equation}
\end{theorem}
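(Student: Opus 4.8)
The plan is to construct an explicit bijection $g\colon\S_n\to\I_n$ by induction on $n$ and to prove that it transports $(\VID,\DES,\LMA,\LMI)$ to $(\DIST,\ASC,\ZERO,\EMA)$; since $|\VID(\pi)|=\ides(\pi)$ and $|\DIST(e)|=\dist(e)$, this is exactly the set-valued refinement of Foata's Theorem~\ref{foata}, enriched by the two extra statistics. I would take the natural coding $\Theta$ of the introduction as the backbone. Directly from $e_i=|\{j<i:\pi_j>\pi_i\}|$ one has $e_i=0$ iff $\pi_i$ is a left-to-right maximum and $e_i=i-1$ iff $\pi_i$ is a left-to-right minimum, while $\DES(\pi)=\ASC(\Theta(\pi))$ was already recorded in the introduction. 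Thus $\Theta$ alone realizes three of the four set correspondences, $\LMA\mapsto\ZERO$, $\LMI\mapsto\EMA$ and $\DES\mapsto\ASC$, for \emph{every} $\pi\in\S_n$, with no pattern-avoidance hypothesis.

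All the difficulty is concentrated in the last pair $\VID\leftrightarrow\DIST$, and here $\Theta$ genuinely breaks: for $\pi=31452$ we have $\VID(\pi)=\{5\}$, whereas $\Theta(\pi)=(0,1,0,0,3)$ satisfies $\DIST=\{2,5\}$, the spurious position $2$ being produced by the left-to-right minimum at position $2$ whose successor value lies to its \emph{right}, which $\Theta$ records as an isolated distinct entry. This is the same obstruction noted after~\eqref{restrict}, namely that a plain last-letter coding cannot carry $\ides$ to $\dist$. I would therefore correct $\Theta$ recursively. Writing $\sigma=T_k(\pi)$, where $k=\sigma_n$ and $\pi\in\S_{n-1}$ is obtained by deleting the last letter and standardizing, I would define the matching operation on inversion sequences to \emph{mirror} the relabeling built into $T_k$: instead of a pure append (which is precisely what $\Theta$ and the bijection $\Phi$ of Section~\ref{sec:wilf} do, and which provably mishandles $\VID$), the operation increments those entries of $g(\pi)$ that record the distinct positive values corresponding to inverse descents lying above the threshold $k$, and then appends one new last entry encoding the new inverse descent created by inserting $k$.

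The engine of the induction would be a structural lemma in the spirit of Lemma~\ref{insert}, maintaining the invariant $\DIST(g(\pi))=\VID(\pi)$ together with a bijection between the distinct positive values of $g(\pi)$, read by their last-occurrence positions, and the inverse-descent values of $\pi$, read by position. Granting the invariant, inserting $k$ adds exactly position $n$ to $\VID$ when $k\neq n$ and removes the position of value $k-1$ precisely when $k$ lies to the left of $k-1$ in $\pi$, and I would check that the mirrored shift-and-append reproduces the identical change in $\DIST$ while leaving $\ASC$, $\ZERO$ and $\EMA$ unchanged. Bijectivity would then follow by exhibiting the inverse operation, which recovers $k$ from the newly appended entry together with the $\LMA$/$\ZERO$ data.

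The hard part will be pinning down the shift precisely and proving that it keeps all four set-statistics correct at once: one must verify that incrementing the entries encoding the higher inverse descents never disturbs the ascent set, the zero set, or the positions with $e_i=i-1$, and that the bookkeeping of last occurrences tracks the creation and destruction of inverse descents position-by-position, not merely in cardinality. This is exactly the point where a genuine Foata-type recoding becomes unavoidable, since the naive codings $\Theta$ and $\Phi$ both fail; an alternative would be to adapt the V-code/S-code bijection underlying Theorem~\ref{foata} and to check directly that it also carries $\LMA\mapsto\ZERO$ and $\LMI\mapsto\EMA$. Either way, the verification of the $\VID\leftrightarrow\DIST$ transfer is the crux.
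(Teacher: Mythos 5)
You should first be aware that the paper does not actually prove Theorem~\ref{thm:gen} within its own text: it is stated in the final remarks with the proof deferred to the forthcoming work~\cite{kl}, where it is obtained by Inclusion--Exclusion (essentially Aas' approach~\cite{aas}) combined with ``a simple but crucial bijection,'' and the paper additionally cites Baril--Vajnovszki~\cite{bv} for a direct coding $b:\S_n\to\I_n$ carrying $(\VID,\DES,\LMA,\LMI,\RMA)$ to $(\DIST,\ASC,\ZERO,\EMA,\RMI)$. Your plan is therefore closest in spirit to the second, bijective route. Your preliminary observations are all correct and worth keeping: $\Theta$ already realizes $\DES\mapsto\ASC$, $\LMA\mapsto\ZERO$ and $\LMI\mapsto\EMA$ on all of $\S_n$; the example $\pi=31452$ with $\VID(\pi)=\{5\}$ but $\DIST(\Theta(\pi))=\{2,5\}$ correctly isolates the failure at the fourth statistic; and your analysis of how $T_k$ changes $\VID$ (add $n$ iff $k\neq n$, delete the position of the value $k-1$ iff $k$ lies to its left) is accurate.

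The genuine gap is that the bijection $g$ is never defined, and its definition is the entire content of the theorem. The ``mirrored shift-and-append'' operation --- which entries of $g(\pi)$ get incremented, by how much, and what the appended last entry is --- is left as a description of what it should accomplish rather than a rule one could execute, and you yourself flag the verification as ``the hard part.'' Worse, the proposed mechanism of incrementing earlier entries is in direct tension with the three invariants you got for free from $\Theta$: incrementing an entry with $e_i=0$ destroys membership in $\ZERO(e)$; incrementing an entry with $e_i=i-1$ yields $e_i=i$, which is not even an inversion sequence; and any increment of a proper subset of entries can create or destroy ascents at the boundary of the incremented set. A correct construction must be engineered so that the incremented entries are exactly certain nonzero, non-maximal entries and so that the increments respect adjacency --- none of which is specified or checked here. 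So while the strategy is reasonable and the diagnosis of where $\Theta$ fails is exactly right, the proposal is a plan for a proof rather than a proof; the crux it correctly identifies ($\VID\leftrightarrow\DIST$ position-by-position, simultaneously with the other three statistics) is precisely the part that remains undone, and it is the part that~\cite{kl} and~\cite{bv} exist to supply.
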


Theorem~\ref{thm:gen} can be proved via the principle of {\em Inclusion-Exclusion} (essentially Aas' approach~\cite{aas}) and a simple (but crucial) bijection, which will appear in~\cite{kl}.
Very recently, during our preparation of this paper, Baril and Vajnovszki~\cite{bv}  constructed a new coding $b:\S_n\rightarrow\I_n$ such that for each $\pi\in\S_n$:
$$
(\VID,\DES,\LMA,\LMI,\RMA)\pi=(\DIST,\ASC,\ZERO,\EMA,\RMI)b(\pi),
$$
which provides a direct bijective proof of  Theorem~\ref{thm:gen}.
Interestingly, we have been able to show that their coding $b$ restricts to a bijection between $\S_n(2413,4213)$ and $\I_n(021)$. This restricted $b$ does not transform ``$\RMI$'' to ``$\EXPO$'', while  our bijection $\Psi^{-1}$ in Theorem~\ref{thm:sex} does.

\subsection*{Acknowledgement} The first author's research was supported by the National Science Foundation of China grant 11501244.

\end{document}